\newcommand{\comments}[1]{}
\newtheorem{theo}{Theorem}
\newtheorem{lemma}{Lemma}[section]
\newtheorem{corollary}{Corollary}[section]
\newtheorem{example}{Example}
\numberwithin{equation}{section}
\def \isnatural {\in\mathbb{N}}
\def \iscomplex {\in\mathbb{C}}
\def\rom{\textup}
\newcommand{\intb} {\partial_{\text{\rmfamily\rom{int}\normalfont}}}
\newcommand{\tef}{transcendental entire function}
\newcommand\qfor{\quad\text{for }}
\def\blfootnote{\xdef\@thefnmark{}\@footnotetext}
\begin{document}
%
%
\title[Maximally and non-maximally fast escaping points]{Maximally and non-maximally fast escaping points of transcendental entire functions}
\author{D. J. Sixsmith}
\address{Department of Mathematics and Statistics \\
	 The Open University \\
   Walton Hall\\
   Milton Keynes MK7 6AA\\
   UK}
\email{david.sixsmith@open.ac.uk}
%
%
\begin{abstract}
We partition the fast escaping set of a {\tef} into two subsets, the maximally fast escaping set and the non-maximally fast escaping set. These sets are shown to have strong dynamical properties. We show that the intersection of the Julia set with the non-maximally fast escaping set is never empty. The proof uses a new covering result for annuli, which is of wider interest.

It was shown by Rippon and Stallard that the fast escaping set has no bounded components. In contrast, by studying a function considered by Hardy, we give an example of a {\tef} for which the maximally and non-maximally fast escaping sets each have uncountably many singleton components.
\end{abstract}
\maketitle
%
%
%
\blfootnote{2010 \itshape Mathematics Subject Classification. \normalfont Primary 37F10; Secondary 30D05.}
\blfootnote{The author was supported by Engineering and Physical Sciences Research Council grant EP/J022160/1.}
\section{Introduction}
Suppose that $f:\mathbb{C}\rightarrow\mathbb{C}$ is a {\tef}. 
The \itshape Fatou set \normalfont $F(f)$ is defined as the set $z\iscomplex$ such that $\{f^n\}_{n\isnatural}$ is a normal family in a neighbourhood of $z$. The \itshape Julia set \normalfont $J(f)$ is the complement in $\mathbb{C}$ of $F(f)$. An introduction to the properties of these sets was given in \cite{MR1216719}. 

For a general {\tef} the \itshape escaping set \normalfont $$I(f) = \{z : f^n(z)\rightarrow\infty\text{ as }n\rightarrow\infty\}$$ was studied first in \cite{MR1102727}. This paper concerns a subset of $I(f)$, the \itshape fast escaping set \normalfont $A(f)$. This was introduced in \cite{MR1684251}, and can be defined \cite{Rippon01102012} by
\begin{equation}
\label{Adef}
A(f) = \{z : \text{there exists } \ell \isnatural \text{ such that } |f^{n+\ell}(z)| \geq M^n(R,f), \text{ for } n \isnatural\}.
\end{equation}
Here the \itshape maximum modulus function \normalfont is defined by $M(r,f) = \max_{|z|=r} |f(z)|,$ for $r \geq 0.$ We write $M^n(r,f)$ to denote repeated iteration of $M(r,f)$ with respect to the variable $r$. In (\ref{Adef}), $R > 0$ is such that $M^n(R,f)\rightarrow\infty$ as $n\rightarrow\infty$.

A major open question in transcendental dynamics is the conjecture of Eremenko \cite{MR1102727} that, for every {\tef} $f$, $I(f)$ has no bounded components. A significant result regarding this conjecture was given by Rippon and Stallard, who showed \cite[Theorem~1.1]{Rippon01102012} that $A(f)$ has no bounded components. In view of this, the fast escaping set has been widely studied in recent years; see, for example, the papers \cite{Osborne1, MR2117213,MR2177186,MR2471088,MR2544754,rippon-2008, Rippon01102012, MR2838342} and \cite{Sixsmith3}. 

We introduce a partition of $A(f)$, and show that the two sets in this partition share many properties with $A(f)$. On the other hand, we show that there is a {\tef} such that the components of these sets have unexpected boundedness properties.

First we define 
\begin{equation}
\label{Adashdef}
A'(f) = \{z \in A(f) : \exists N\isnatural \text{ s.t. } |f^n(z)| = M(|f^{n-1}(z)|,f), \text{ for } n \geq N\},
\end{equation}
and we let $$A''(f) = A(f)\backslash A'(f).$$ We describe $A'(f)$ as the \itshape maximally fast escaping set\normalfont,  and $A''(f)$ as the \itshape non-maximally fast escaping set\normalfont. A set $S$ is \itshape completely invariant \normalfont if $z \in S$ implies that $f(z) \in S$ and $f^{-1}(\{z\}) \subset S$. It is clear from the definitions that both $A'(f)$ and $A''(f)$ are completely invariant.\\

Our first result shows that, in some sense, $A'(f)$ is at most a small set.
\begin{theo}
\label{Ttiny}
If $f$ is a {\tef}, then $A'(f)$ is contained in a countable union of curves each of which is analytic except possibly at its endpoints.
\end{theo}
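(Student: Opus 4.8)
The plan is to reduce the theorem to a statement about the so-called \emph{maximum modulus set} $E := \{w \iscomplex : |f(w)| = M(|w|,f)\}$ and its preimages. First I would observe that $A'(f)\subseteq\bigcup_{k\geq 0}f^{-k}(E)$: if $z\in A'(f)$ then, taking $n=N$ in (\ref{Adashdef}), we have $|f(f^{N-1}(z))|=M(|f^{N-1}(z)|,f)$, so that $f^{N-1}(z)\in E$ and hence $z\in f^{-(N-1)}(E)$. Since the stated class of sets is closed under countable unions, it is then enough to show that each $f^{-k}(E)$ lies in a countable union of curves analytic except possibly at their endpoints, and to take the union over $k$.

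The key step is a criterion for membership of $E$ in terms of the meromorphic function $h(w):=wf'(w)/f(w)$, which is non-constant since $f$ is a {\tef}, hence not a monomial. Suppose $w\in E$ with $w\neq 0$, and write $r=|w|>0$, so that $M(r,f)>0$ and therefore $f(w)\neq 0$. Then $\theta\mapsto\log|f(re^{i\theta})|$ is real-analytic near the argument of $w$, where it attains a maximum, and differentiating in $\theta$ shows that its derivative there equals $-\operatorname{Im}(h(re^{i\theta}))$, so $\operatorname{Im}(h(w))=0$; also $0$ lies in $h^{-1}(\mathbb{R})$, since $h$ extends across the origin with a finite real value. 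Hence $E\subseteq\Gamma:=h^{-1}(\mathbb{R})$, and more generally $f^{-k}(E)\subseteq f^{-k}(\Gamma)=(h\circ f^k)^{-1}(\mathbb{R})$, where $h\circ f^k$ is again a non-constant meromorphic function on $\mathbb{C}$.

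It remains to show that, for any non-constant meromorphic function $g$ on $\mathbb{C}$, the set $g^{-1}(\mathbb{R})$ lies in a countable union of curves each analytic except possibly at its endpoints. Let $S$ be the union of the poles of $g$ and the zeros of $g'$; this set is discrete, hence countable and closed in $\mathbb{C}$. Away from $S$ the map $g$ is locally biholomorphic, so $g^{-1}(\mathbb{R})\setminus S$ is a real-analytic $1$-manifold and hence a countable union of analytic arcs; near a point $w_0$ of $S$, a local normal form (with $g(w)-g(w_0)$ or $1/g(w)$ comparable to $(w-w_0)^m$) shows that $g^{-1}(\mathbb{R})$ is locally a finite union of analytic arcs ending at $w_0$. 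Patching these pieces together, and then taking the union over $k$, yields the theorem. I expect the only substantial work to be this last, purely real-analytic, structural step --- checking the local picture at the points of $S$ and ruling out accumulation of arcs --- which one could alternatively replace by an appeal to the classical description of the curves on which $M(r,f)$ is attained, going back to Blumenthal. The two dynamical ingredients, namely $A'(f)\subseteq\bigcup_k f^{-k}(E)$ and its stability under preimages, are then routine.
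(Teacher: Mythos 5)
Your proposal is correct, and the core reduction is the same as the paper's: you both observe that if $z\in A'(f)$ then $f^n(z)\in\mathcal{M}(f)$ for all $n\geq N$, hence $A'(f)\subset\bigcup_{k\geq 0}f^{-k}(\mathcal{M}(f))$. Where you diverge is in how the structure of $\mathcal{M}(f)$ (and its preimages) is established. The paper simply cites the classical theorem (Valiron, going back to Blumenthal) that $\mathcal{M}(f)$ is a countable union of maximal analytic curves, and then treats the passage to $f^{-k}(\mathcal{M}(f))$ as routine without comment. You instead give a self-contained argument: the calculation $\frac{\partial}{\partial\theta}\log|f(re^{i\theta})| = -\operatorname{Im}\bigl(h(re^{i\theta})\bigr)$ with $h(w)=wf'(w)/f(w)$ shows $\mathcal{M}(f)\subset h^{-1}(\mathbb{R})$, and then $f^{-k}(\mathcal{M}(f))\subset (h\circ f^k)^{-1}(\mathbb{R})$ with $h\circ f^k$ again a non-constant meromorphic function, so the same structural lemma about $g^{-1}(\mathbb{R})$ (real-analytic $1$-manifold away from a discrete set, finitely many arcs emanating from each exceptional point) applies uniformly to every $k$. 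This buys you two things: you avoid an external citation, and you handle the preimage step explicitly rather than implicitly, which is the one place the paper's proof is slightly terse. The trade-off is that you must verify the small details you flag yourself (the local normal-form analysis at poles of $h\circ f^k$ and zeros of its derivative, and the non-accumulation of arcs), all of which are standard. Both proofs are valid; yours is marginally more self-contained and transparent about why preimages preserve the desired structure.
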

In Example~\ref{Example1} we give a {\tef} such that $A'(f)$ has a single unbounded component, consisting of a countable union of analytic curves. It follows from Theorem~\ref{Ttiny} that this example is, in this sense, maximal. \\

It was shown in \cite{MR1684251} that it follows from the construction in \cite{MR1102727} that $A(f)~\ne~\emptyset$. In \cite[Remark 2]{MR2117213} it was further shown that $A(f) \cap J(f) \ne \emptyset$. In Example~\ref{Example1b} we give a {\tef} such that $A'(f)=\emptyset$. On the other hand, using a new annuli covering result and a recent result regarding the properties of the boundary of a multiply connected Fatou component, we show that $A''(f) \cap J(f)\ne\emptyset$; roughly speaking, this means that there are always points in the Julia set for which the rate of escape is "fast" but not "maximally fast".
\begin{theo}
\label{Tmain}
If $f$ is a {\tef}, then $A''(f) \cap J(f) \ne \emptyset.$
\end{theo}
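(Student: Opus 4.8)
The plan is to produce a point $z$ which is fast escaping (so in $A(f)$), lies in $J(f)$, yet for which the orbit infinitely often lands strictly inside the disc of radius equal to its previous modulus, so that $|f^n(z)| < M(|f^{n-1}(z)|,f)$ infinitely often, forcing $z \in A''(f)$. The natural source of such a point is the boundary of a multiply connected Fatou component, using the recent structural results that such a component $U$ satisfies $U \subset A(f)$, that $\partial U \subset A(f) \cap J(f)$, and that $\partial U$ contains points whose orbits behave in a controlled ``annular'' fashion. Since $A(f)$ has no bounded components (Rippon--Stallard), and in particular $A(f)\cap J(f)$ is nonempty, the content of the theorem is really the \emph{non}-maximality, i.e. producing a point of $A(f)\cap J(f)$ whose modulus is infinitely often dragged below the maximum modulus of its predecessor.

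**The case split.** I would first dispose of the case where $f$ has a multiply connected Fatou component $U$. Then $\partial U \subset J(f)$ and $\partial U \subset A(f)$. If every point of $\partial U$ were in $A'(f)$, then by Theorem~\ref{Ttiny} $\partial U$ would be contained in a countable union of analytic curves; but the boundary of a multiply connected Fatou component is a non-trivial continuum separating the plane (indeed it is known to be quite ``wild'' --- not contained in countably many smooth arcs), so this is impossible, and hence $\partial U \cap A''(f) \ne \emptyset$, giving a point in $A''(f) \cap J(f)$. The remaining, and main, case is when $f$ has \emph{no} multiply connected Fatou component.

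**The main case via the annuli covering result.** In the absence of a multiply connected Fatou component I would argue directly, using the new annuli covering result advertised in the abstract. The idea is to build, by induction, a nested sequence of annuli $A_n = \{r_n \le |z| \le R_n\}$ with $r_n \to \infty$, chosen so that (i) $f$ maps $A_n$ over $A_{n+1}$ in a covering sense --- every point of $A_{n+1}$ has a preimage in $A_n$ --- while (ii) the modulus data $(r_n, R_n)$ is arranged so that $A_{n+1}$ is \emph{strictly} contained in $\{|z| \le M(r_n',f)\}$ for appropriate $r_n' \in [r_n,R_n]$, forcing that a point whose $n$-th iterate lands near the \emph{inner} boundary $|z| = r_{n+1}$ has $|f^{n+1}(z)| = r_{n+1} < M(|f^n(z)|,f)$ whenever $|f^n(z)|$ is large enough relative to $r_{n+1}$. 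One must interleave this with the fast-escaping lower bound: by choosing $r_n$ to grow at least as fast as $M^n(R,f)$ (which is possible because $M(r,f)/r \to \infty$, so there is room between the fast-escaping threshold and the full maximum modulus), the resulting nested intersection of preimages yields a point $z_0$ with $|f^n(z_0)| \ge M^n(R,f)$, hence $z_0 \in A(f)$, but with $|f^{n+1}(z_0)|$ infinitely often much smaller than $M(|f^n(z_0)|,f)$, hence $z_0 \in A''(f)$. Finally one checks $z_0 \in J(f)$: since $A(f) \cap F(f)$ consists only of multiply connected Fatou components (a standard fact), and there are none by assumption, $z_0 \in A(f) \subset J(f)$.

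**The main obstacle.** The hard part will be step (i)--(ii) in the main case: making the annuli covering result do precisely what is needed, namely producing annuli whose image under $f$ genuinely \emph{covers} the next annulus (so that the inductive preimage construction produces a nonempty nested intersection, via compactness) while simultaneously keeping the inner radius of the target annulus strictly below the maximum modulus of the source annulus. Controlling covering behaviour of $f$ on annuli without any hypothesis on $f$ --- in particular without assuming finite order or any regularity of $M(r,f)$ --- is delicate, and this is exactly why a new covering lemma for annuli is required; verifying that the lemma's hypotheses can always be met for an arbitrary {\tef}, and that the two competing requirements (covering versus the strict modulus gap) are compatible, is where the real work lies. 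A secondary technical point is ensuring the constructed point is not eventually ``maximally fast'' merely by accident --- this is handled by making the dips below $M(|f^{n-1}(z)|,f)$ happen along an infinite subsequence of $n$, which the inductive freedom in choosing the annuli permits.
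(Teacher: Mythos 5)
Your overall strategy --- split on whether $f$ has a multiply connected Fatou component, use the boundary of such a component in one case, and in the other build a nested sequence of annuli that the iterates cover while staying below the maximum modulus --- matches the paper's proof at the level of architecture. However, the proposal relies at two decisive points on facts that are either not known or actually false, and both would stop the proof.

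First, in the multiply connected case you argue that $\partial U \subset A'(f)$ would, via Theorem~\ref{Ttiny}, force $\partial U$ to lie in a countable union of analytic curves, and that this is impossible because such a boundary is ``quite wild'' and ``not contained in countably many smooth arcs.'' That last step is unjustified: there is no result asserting that the boundary of a multiply connected Fatou component of a transcendental entire function can never be covered by countably many analytic arcs (note that a continuum separating the plane can perfectly well be a single analytic curve, e.g.\ a circle, so topology alone gives no contradiction). The paper's argument is substantially sharper and uses dynamical, not topological, input: by Lemma~\ref{Lnhd} the function $R_A$ of (\ref{RAdef}) is \emph{constant}, say equal to $R'$, on $\intb U$, so by Lemma~\ref{LAdashandRA} every $z \in \intb U \cap A'(f)$ eventually satisfies $|f^n(z)| = M^n(R',f)$ with the \emph{same} $R'$; since $\{z\in\mathcal{M}(f): |z|=R\}$ is finite for each $R$, this confines $\intb U\cap A'(f)$ to a countable set, while $\intb U$ itself is an uncountable continuum contained in $A(f)\cap J(f)$ by Lemma~\ref{mconnlemm}. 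Your heuristic would need this quantitative argument to become a proof.

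Second, in the main case you conclude $z_0\in J(f)$ from the assertion that ``$A(f)\cap F(f)$ consists only of multiply connected Fatou components (a standard fact).'' This is not a fact: the paper explicitly cites examples \cite{MRunknown, MR2959920} of transcendental entire functions with \emph{simply connected} Fatou components contained in $A(f)$, so the absence of multiply connected components does not force $A(f)\subset J(f)$, and your deduction that $z_0\in J(f)$ collapses. The paper secures membership in $J(f)$ at the level of the nested-compacta construction itself: since $f$ has no multiply connected Fatou component, Montel's theorem shows that no open annulus $A(2r_k,4r_k)$ can lie entirely in $F(f)$, so each compact annulus $E_n$ meets $J(f)$; Lemma~\ref{LemmaRS} is then applied to the compact sets $E_n\cap J(f)$ directly, producing $\zeta\in J(f)$ from the outset. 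That same Montel step is also what supplies the minimum-modulus hypothesis $m(s,f^n)\le 1$ needed to invoke Corollary~\ref{Ccovering} --- a point your sketch correctly flags as ``the real work'' but does not carry out.
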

It is known \cite[Theorem 5.1(c)]{Rippon01102012} that $A(f)$ is dense in $J(f)$, and \cite[Theorem~1.2]{Rippon01102012} that if $U$ is a Fatou component that meets $A(f)$, then $U$ is contained in $A(f)$. In the following theorem we strengthen these facts, and show that the sets $A'(f)$ and $A''(f)$ have strong dynamical properties relating to the Fatou and Julia sets. Note \cite[Theorem 4.4]{Rippon01102012} that all multiply connected Fatou components are in $A(f)$, and \cite{MRunknown, MR2959920} there exist {\tef}s with simply connected Fatou components contained in $A(f)$.
\begin{theo}
\label{Tprops}
Suppose that $f$ is a {\tef}. Then the following all hold.
\begin{enumerate}[(a)]
\item If $U$ is a simply connected Fatou component of $f$ such that $U \cap A(f) \ne \emptyset$, then $U \subset A''(f)$.
\item If $U$ is a multiply connected Fatou component of $f$, then $U \cap A''(f) \ne \emptyset$.
\item $A''(f)$ is dense in $J(f)$ and $J(f) \subset \partial A''(f)$.
\item If $A'(f) \cap J(f) \ne \emptyset$, then $A'(f)$ is dense in $J(f)$ and $J(f) \subset \partial A'(f)$. 
\item If $A'(f) \cap F(f) = \emptyset$, then $J(f) = \partial A''(f)$.
\item If $A'(f) \cap J(f) \ne \emptyset$ and $A'(f) \cap F(f) = \emptyset$, then $J(f) = \partial A'(f)$. 
\end{enumerate}
\end{theo}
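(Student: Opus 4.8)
The plan is to dispatch parts (b)--(f) by soft arguments and to treat part~(a), the real content, separately. The ingredients for the soft parts are: Theorem~\ref{Ttiny} (so $A'(f)$, being a countable union of analytic arcs, is meagre and in particular has empty interior); Theorem~\ref{Tmain}; the complete invariance of $A'(f)$ and $A''(f)$; and the standard facts that a Fatou component meeting $A(f)$ lies in $A(f)$ \cite[Theorem~1.2]{Rippon01102012}, that every multiply connected Fatou component lies in $A(f)$ \cite[Theorem~4.4]{Rippon01102012}, that the backward orbit of a non-exceptional point is dense in $J(f)$, and that the repelling periodic points are dense in $J(f)$.

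For (b): $U\subseteq A(f)=A'(f)\cup A''(f)$ by \cite[Theorem~4.4]{Rippon01102012}, and since $A'(f)$ has empty interior while $U$ is nonempty and open, $U$ meets $A''(f)$. For (c): by Theorem~\ref{Tmain} there is $w\in A''(f)\cap J(f)$; since $A''(f)\subseteq I(f)$ no point of $A''(f)$ is periodic, so $w$ is not an exceptional fixed point, and if $w$ happens to be the exceptional point it is then non-periodic and I replace it by $f(w)\in A''(f)\cap J(f)$, which is non-exceptional. The backward orbit of a non-exceptional point of $A''(f)\cap J(f)$ is dense in $J(f)$ and, by complete invariance, lies in $A''(f)$; hence $A''(f)$ is dense in $J(f)$. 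Since $\mathbb{C}\setminus A''(f)$ contains the repelling periodic points it is also dense in $J(f)$, and combining the two density statements gives $J(f)\subseteq\overline{A''(f)}\cap\overline{\mathbb{C}\setminus A''(f)}=\partial A''(f)$. Part (d) is this argument verbatim with $A''$ replaced by $A'$, the hypothesis $A'(f)\cap J(f)\ne\emptyset$ replacing the use of Theorem~\ref{Tmain}.

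For (e): (c) already gives $J(f)\subseteq\partial A''(f)$, so I only need $F(f)\cap\partial A''(f)=\emptyset$. If $z\in F(f)\cap\partial A''(f)$, its Fatou component $U$ is an open neighbourhood of $z$ meeting $\overline{A''(f)}$, hence meeting $A''(f)$, hence $U\subseteq A(f)$ by \cite[Theorem~1.2]{Rippon01102012}; since $A'(f)\cap F(f)=\emptyset$ this forces $U\subseteq A''(f)$, so $z$ has a neighbourhood inside $A''(f)$ and $z\notin\overline{\mathbb{C}\setminus A''(f)}$, contradicting $z\in\partial A''(f)$. For (f): (d) gives $J(f)\subseteq\partial A'(f)$, and if $z\in F(f)\cap\partial A'(f)$ then every neighbourhood of $z$ meets $A'(f)$, yet some neighbourhood of $z$ lies in $F(f)$ and is therefore disjoint from $A'(f)$ by hypothesis --- a contradiction. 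Hence $J(f)=\partial A'(f)$.

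For (a): by \cite[Theorem~1.2]{Rippon01102012} we have $U\subseteq A(f)$, so the claim is $U\cap A'(f)=\emptyset$. Suppose $z_0\in U\cap A'(f)$ and fix $N$ with $f^n(z_0)\in\Gamma:=\{w:|f(w)|=M(|w|,f)\}$ for all $n\ge N$, so that $|f^{N+j}(z_0)|=M^j(|f^N(z_0)|,f)$ for every $j$ --- the fastest growth geometrically possible from the modulus $|f^N(z_0)|$. Since $f^N(U)$ is open and $f^N(z_0)\in f^N(U)$ has large positive modulus, I can choose $z_1\in U$ with $|f^N(z_1)|<|f^N(z_0)|$; then $|f^{N+j}(z_1)|\le M^j(|f^N(z_1)|,f)<M^j(|f^N(z_0)|,f)=|f^{N+j}(z_0)|$ for all $j$. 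Because $f$ is transcendental, $\log M(r,f)/\log r\toinfty$, and writing $\phi(t)=\log M(e^t,f)$ (increasing and convex) one checks that $\phi'\toinfty$, whence $\log M^j(|f^N(z_0)|,f)-\log M^j(|f^N(z_1)|,f)\toinfty$; therefore $\log|f^{N+j}(z_0)|-\log|f^{N+j}(z_1)|\toinfty$. On the other hand $f^{N+j}$ maps $U$ into a single Fatou component while contracting the hyperbolic metric, so $f^{N+j}(z_0)$ and $f^{N+j}(z_1)$ stay at bounded hyperbolic distance in that component. The crux --- and the point at which simple connectivity is used --- is to deduce from this that $\log|f^{N+j}(z_0)|-\log|f^{N+j}(z_1)|$ stays bounded, which is the desired contradiction: a simply connected Fatou component of $A(f)$ does not wind around the origin (unlike a multiply connected one, whose images surround $0$ and contain large round annuli, so that $\log|\cdot|$ there has unbounded oscillation), so on it $\log|\cdot|$ is single-valued up to bounded error and its oscillation should be controlled by the contracted hyperbolic diameter, provided one also bounds how deeply such a component can dip towards the origin relative to its size. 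Pinning down this geometric estimate, and verifying that it really does outpace the possibly slow growth of $M(r,f)$, is the step I expect to be the main obstacle; everything else is bookkeeping.
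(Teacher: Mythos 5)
Your parts (b)--(f) are correct and essentially the paper's own proof. The only divergence is cosmetic: in (c) and (d) you derive the density and boundary statements directly from the density of backward orbits of non-exceptional points and the density of repelling periodic points, with a small detour to replace a possibly exceptional point by its image; the paper instead invokes the packaged statement \cite[Lemma~10]{rippon-2008} (Lemma~\ref{Jlemm} here). Both routes rest on the same elementary facts and have the same content, and your handling of part (b) via Theorem~\ref{Ttiny} (so that $A'(f)$ has empty interior) together with \cite[Theorem~4.4]{Rippon01102012} is precisely the paper's argument.

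Part (a), as you say yourself, is not finished, and you have located the gap exactly. After choosing $z_0\in U\cap A'(f)$ and a nearby $z_1$ with $|f^N(z_1)|<|f^N(z_0)|$, you deduce (correctly, and equivalently to the paper's appeal to Corollary~\ref{CMtheo}) that $\log|f^{N+j}(z_0)|-\log|f^{N+j}(z_1)|\to\infty$. What you still need, and what you gesture at without establishing, is that for two fixed points of a simply connected Fatou component contained in $I(f)$ the ratio $|f^n(z_0)|/|f^n(z_1)|$ stays \emph{bounded}. This is not ``bookkeeping''; it is a known distortion lemma, stated in the paper as Lemma~\ref{dlemm} and taken from \cite[Lemma~7]{MR1216719}. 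That lemma is exactly the package which converts your observation about hyperbolic contraction into a bound on the modulus ratio, and simple connectivity is exactly the hypothesis it uses, as you anticipated. Once you cite it, with $K=\{f^N(z_0),f^N(z_1)\}$ and noting that $U_N$ is again simply connected and contained in $I(f)$, your contradiction closes, and the argument becomes identical to the paper's. Your residual worry about ``outpacing the possibly slow growth of $M(r,f)$'' is misplaced: the lemma gives a constant bound independent of $n$, so it does not matter how slowly $M^n(R,f)/M^n(R',f)$ tends to infinity.
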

Clearly it follows from part (a) that if $f$ has no multiply connected Fatou component, then $A'(f) \cap F(f) = \emptyset$. The additional hypothesis that $A'(f) \cap F(f) = \emptyset$ in parts (e) and (f) of Theorem~\ref{Tprops} is required; in Example~\ref{Emconn} we give a {\tef}, $f$, such that $$A'(f) \cap F(f) \ne \emptyset, \ J(f) \ne \partial A''(f) \text{ and } J(f) \ne \partial A'(f).$$ In Example~\ref{Emconna} we give a {\tef}, $f$, which has a multiply connected Fatou component and which satisfies $A'(f)=\emptyset$.\\

We recall the property, mentioned earlier, that $A(f)$ has no bounded components. By studying a function considered by Hardy we show that this property does not hold for $A'(f)$ or $A''(f)$. 
\begin{theo}
\label{Texample}
There is a {\tef} $f$ such that
\begin{enumerate}[(a)]
\item $A'(f)$ is uncountable and totally disconnected;
\item $A''(f)$ has uncountably many singleton components and at least one unbounded component.
\end{enumerate}
\end{theo}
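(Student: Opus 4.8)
\emph{The function.} The function $f$ is the {\tef} considered by Hardy. The features of it I would use are: $f$ is real on $\mathbb{R}$; its maximum modulus grows slowly and regularly, with well-understood asymptotics; and, crucially, on a rapidly increasing sequence of real scales $f$ restricts to real intervals on which it ``folds'' --- each such interval is mapped by $f$ over a region covering several disjoint scaled copies of later intervals, while a dense family of complementary ``gaps'' is mapped into a part of the plane that is not fast escaping. This folding structure is what distinguishes Hardy's function from a function with positive Taylor coefficients (for such a function the positive real axis lies in $A'(f)$, so $A'(f)$ is certainly not totally disconnected). From Hardy's work I would extract three inputs: the set $E$ of points at which $M(r,f)$ is attained; lower bounds on the heights of the ``humps'' of $f$, sufficient to force suitable itineraries to produce orbits that escape at least as fast as $M^n(R,f)$; and enough control to transport this picture back under inverse branches of $f$.

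\emph{Part (a).} I would prove the two assertions separately. Uncountability of $A'(f)$: a standard nested-intervals (symbolic dynamics) argument on the folding structure yields a Cantor set of real points whose forward orbits are fast escaping and, from some iterate on, lie entirely in $E$; by the definition (\ref{Adashdef}) such points lie in $A'(f)$. Total disconnectedness: by Theorem~\ref{Ttiny} we may write $A'(f) \subset \bigcup_j \gamma_j$ with each $\gamma_j$ a closed analytic curve (adjoin the endpoints if needed). The folding shows that the maximally fast escaping real points form a subset of $\mathbb{R}$ with empty interior --- every sub-interval eventually meets a gap and leaks out --- and pulling this back along the countably many analytic inverse branches of $f$ shows that $A'(f) \cap \gamma_j$ has empty interior in $\gamma_j$, hence is zero-dimensional, for every $j$. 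Since each $A'(f) \cap \gamma_j$ is relatively closed in $A'(f)$, the countable sum theorem for topological dimension gives $\dim A'(f) \le 0$, so $A'(f)$ is totally disconnected.

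\emph{Part (b).} The singleton components of $A''(f)$ come from the complementary side of the same dichotomy. The real points that escape fast but whose orbits leave $E$ infinitely often --- thus overtaking $M^n(R,f)$ without ever escaping maximally --- also form an uncountable set with empty interior in $\mathbb{R}$. Transporting this to the curves $\gamma_j$ and restricting to those $\gamma_j$ on which $A(f)$ is locally ``thin'' (so that near a point $z$ of $\gamma_j$ one has $A(f) \subset \gamma_j$, and hence $A''(f)$ is zero-dimensional near $z$) produces uncountably many $z \in A''(f)$ for which the component of $A''(f)$ containing $z$ is $\{z\}$. For the unbounded component of $A''(f)$ it suffices, by \cite[Theorem~1.1]{Rippon01102012}, to exhibit one component $C$ of $A(f)$ with $C \subset A''(f)$, since $C$ is then automatically unbounded. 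For Hardy's function one finds such a $C$ either as a component of $A(f)$ lying off the countably many curves of Theorem~\ref{Ttiny} (hence disjoint from $A'(f)$), or --- if $f$ has a multiply connected Fatou component $U$ --- by joining $U$ to the ``big annuli'' around its forward images (all in $A(f)$ by \cite[Theorem~4.4]{Rippon01102012}) into an unbounded connected subset of $A(f)$ and deleting $A'(f)$, which, being totally disconnected, does not disconnect it; Theorem~\ref{Tprops}(b) then guarantees that the resulting set meets $A''(f)$.

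\emph{The main obstacle.} The hard part is the quantitative symbolic-dynamics estimate: Hardy's growth information must be precise enough to certify simultaneously that (1) an uncountable family of itineraries gives genuinely fast escaping orbits, (2) an uncountable sub-family stays in $E$ for all large $n$ (these enter $A'(f)$), while (3) another uncountable family leaves $E$ infinitely often yet still fast escapes (these enter $A''(f)$), and that the ``gaps'' remain dense at every scale. Once this folding picture is in hand, the topological steps --- the dimension argument for~(a) and the separation of the singleton-component curves from the thick unbounded component for~(b) --- are comparatively routine.
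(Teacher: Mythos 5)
Your account of part (a) matches the paper's approach closely: the paper uses Hardy's description of the locus $\mathcal{M}(g_\alpha)$ where the maximum modulus is attained (with $g_\alpha(z)=\alpha\exp(e^{z^2}+\sin z)$), writes $A'(g_\alpha)=\bigcup_k g_\alpha^{-k}(S)$ for an explicit real set $S$, runs a nested-interval/symbolic-dynamics argument with the intervals $I_k=[2k\pi,(2k+1)\pi]$ to get uncountability, uses the ``gap'' intervals $J_k$ to get total disconnectedness of $S$, and then invokes the countable sum theorem for zero-dimensionality. Your Theorem~\ref{Ttiny}-plus-sum-theorem route is a minor repackaging of the same idea and is fine.

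Part (b) is where the gaps are, and you have misidentified where the difficulty lives. The key step in the paper is \emph{not} a sharper symbolic-dynamics estimate, but the complex-analytic claim (\ref{mainclaim}): for $\alpha$ small, the real ray $(q_\alpha,\infty)$ is an \emph{entire component} of $A(g_\alpha)$. This is what forces each point of the uncountable totally disconnected set $(q_\alpha,\infty)\cap A''(g_\alpha)$ to be a singleton component of $A''(g_\alpha)$, because the $A''$-component of such a point is a connected subset of the $A$-component, hence lies in $(q_\alpha,\infty)\cap A''(g_\alpha)$. Your phrase ``restricting to those $\gamma_j$ on which $A(f)$ is locally `thin' (so that near $z$ one has $A(f)\subset\gamma_j$)'' names the desired conclusion without supplying a mechanism, and the local-to-global inference is not automatic: local thinness of $A(f)$ near $z$ does not by itself prevent the $A$-component of $z$ from leaving the neighbourhood and coming back. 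The paper's proof of (\ref{mainclaim}) is a substantial construction: one shrinks $\alpha$ so that $g_\alpha$ has an unbounded attracting basin $U$ containing the positive imaginary axis $\Gamma_0$ and a large disc, shows $U$ also contains a nested sequence of preimages $\Gamma_n$ of $\Gamma_0$ squeezing onto the positive real axis inside a wedge $V''=\{x>K,\ 0<y<e^{-x^2}\}$, and then uses these curves in $U$ to separate any off-axis point of $A(g_\alpha)$ from $(q_\alpha,\infty)$. None of that appears in your sketch.

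Your argument for the unbounded component of $A''$ also has a flaw. Exhibiting a component $C$ of $A(f)$ with $C\cap A'(f)=\emptyset$ would indeed suffice, but you give no reason such a $C$ exists for Hardy's function; and the fallback ``delete $A'(f)$, which, being totally disconnected, does not disconnect it'' is false --- removing even a single point can disconnect a connected set (e.g.\ an interval). Moreover the multiply connected branch is vacuous here: the paper notes that $g_\alpha$ is bounded on $\Gamma_0$, so by Lemma~\ref{Lbaker} it has \emph{no} multiply connected Fatou component. The paper instead takes the point $\zeta\in A''(g_\alpha)\cap J(g_\alpha)$ produced in the proof of Theorem~\ref{Tmain} (with the choice $K=1/2048$ and $m_k\equiv 1$), assumes its $A''$-component $Q$ is bounded, observes that the unbounded $A$-component $Q'\supsetneq Q$ must then contain some $\zeta'\in A'(g_\alpha)$, pushes forward into $(q_\alpha,\infty)$ using (\ref{mainclaim}), and derives a contradiction between the lower bound (\ref{gsize}) on the real ray and the upper bound (\ref{quitesmall}) built into the construction of $\zeta$. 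So (\ref{mainclaim}) is load-bearing for both halves of part (b), and your proposal does not engage with it.
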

On the other hand, in Example~\ref{Lexample} we give a {\tef} such that $A(f)$ has an unbounded component which is contained in $A'(f)$.  \\

The structure of this paper is as follows. To prove Theorem~\ref{Tmain} we require a new covering result for annuli, which is of independent interest. This is given in Section~\ref{S1}. In Section~\ref{S2} we prove Theorem~\ref{Ttiny} and Theorem~\ref{Tmain}. In Section~\ref{S3} we prove Theorem~\ref{Tprops}. Finally, in Section~\ref{Sfinal} we give the examples; the proof of Theorem~\ref{Texample} is included in Example~\ref{HExample}.
\section{A new covering result}
\label{S1}
In this section we give a new covering result for annuli, which is used to construct a point in $A''(f) \cap J(f)$ in the case that $f$ is a {\tef} with no multiply connected Fatou component. This result is similar to \cite[Theorem 2.2]{2013arXiv1301.1328R}, generalised to the whole family of iterates of a {\tef}. Here the \itshape minimum modulus function \normalfont is defined, for $r \geq 0$, by $m(r,f) = \min_{|z|=r} |f(z)|$. We also use the notation for an annulus $$A(r_1, r_2) = \{ z : r_1 < |z| < r_2 \}, \qfor 0 < r_1 < r_2, $$ and a disc $$B(\zeta, \ r) = \{ z : |z-\zeta| < r \}, \qfor 0 < r, \ \zeta\in\mathbb{C}.$$
\begin{theo}
\label{Tcovering}
Suppose that $f$ is a {\tef}, and that $$1<\lambda<\lambda'<\lambda''.$$ Then there exist $R' > 0$ and a function $\epsilon : \mathbb{R} \to \mathbb{R}$, both of which depend on $f,\lambda,\lambda'$ and $\lambda''$, such that the following holds. If $r \geq R'$, $n\isnatural$, $\eta \geq 0$ and
\begin{equation}
\label{funnyeq1}
\text{there exists } s \in (\lambda r, \lambda' r) \text{ such that } m(s, f^n) \leq \eta,
\end{equation}
then there exists $w \in B(0, M^n(r, f))$ such that
\begin{equation*}
f^n(A(r,\lambda'' r)) \supset B(0, M^n(r, f))\backslash B(w, \epsilon(r)\max\{|w|,\eta\}).
\end{equation*}
Moreover, $\epsilon(r)\rightarrow 0$ as $r\rightarrow\infty$.
\end{theo}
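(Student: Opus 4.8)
The plan is to reduce Theorem~\ref{Tcovering} to the case $n=1$, which is essentially \cite[Theorem~2.2]{2013arXiv1301.1328R}, by running the argument of that result for the {\tef} $g=f^n$ while arranging that the radius $R'$ and the error function $\epsilon$ come out independent of $n$. Two remarks simplify matters. First, the conclusion only weakens when $\eta$ is increased --- the omitted disc $B(w,\epsilon(r)\max\{|w|,\eta\})$ merely grows --- so it suffices to treat $\eta = m(s,f^n)$. Second, $f^n$ is itself a {\tef}, so the genuine issue is uniformity in $n$; this is supplied by standard growth estimates for the iterated maximum modulus.

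\emph{Step 1: growth estimates.} First I would fix $R_0$ so large that $M(tr,f) \ge tM(r,f)$ for all $r \ge R_0$ and $t \ge 1$ (possible since $\tfrac{d}{d\log r}\log M(r,f)\to\infty$ as $r\to\infty$), so that, in particular, $M(r,f)\ge 2\lambda'' r$ for $r\ge R_0$. One then has, for $r\ge R_0$ and $n\isnatural$,
\begin{equation*}
M(r,f^n) \ge M^n(r,f), \qquad \frac{M^n(tr,f)}{M^n(r,f)} \ge \frac{M(tr,f)}{M(r,f)}\quad(t\ge 1),
\end{equation*}
where the first inequality is the standard iterated growth lemma (see \cite{Rippon01102012}) and the second follows by induction from $M(tr,f)\ge tM(r,f)$. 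Combining these, $M(\lambda'' r,f^n) \ge M^n(\lambda'' r,f) \ge \tfrac{M(\lambda'' r,f)}{M(r,f)}M^n(r,f)$; since $M(\lambda'' r,f)/M(r,f)\to\infty$ as $r\to\infty$, for any prescribed $K>1$ there is $R_1 = R_1(K)\ge R_0$ with $M(\lambda'' r,f^n)\ge K\,M^n(r,f)$ for all $r\ge R_1$ and $n\isnatural$. The constant $K$ will be chosen at the end depending only on $\lambda,\lambda',\lambda''$, and I would put $R' = R_1(K)$. Convexity of $\log M(\cdot,f^n)$ in $\log r$, again using $\tfrac{d}{d\log r}\log M(\cdot,f^n)\to\infty$ uniformly in $n$ (a consequence of $M(r,f^n)\ge M^n(r,f)$), similarly yields $M(\lambda'' r,f^n)/M(\lambda' r,f^n)\to\infty$ as $r\to\infty$, uniformly in $n$.

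\emph{Step 2: the covering argument.} Write $g = f^n$ and $\eta = m(s,f^n)$. Following \cite[Theorem~2.2]{2013arXiv1301.1328R}, I would apply the argument principle on $A(r,\lambda'' r)$ with the data: $g$ attains a value of modulus $\le\eta$ on the circle $|z|=s$, which (since $\lambda r<s<\lambda' r$) lies well inside the annulus, while $\max_{|z|=\lambda'' r}|g| = M(\lambda'' r,f^n)\ge K\,M^n(r,f)$. Because $|g|$ increases from at most $\eta$ on $|z|=s$ to at least $K\,M^n(r,f)$ on $|z|=\lambda'' r$, Jensen's formula forces $g$ to have many zeros in $A(s,\lambda'' r)$. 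For a value $w'$ with $\max\{|w'|,\eta\}$ not too small and $|w'|<M^n(r,f)$, comparing the winding numbers of $g-w'$ over the two boundary circles of $A(r,\lambda'' r)$ shows that $g-w'$ has a zero there, so $w'$ is covered; the values of small modulus near the image of the point where $|g|$ is smallest are covered using the open mapping theorem at the zeros of $g$ in the annulus, and a Koebe-type distortion estimate bounds the residual uncovered set by a disc $B(w,\delta)$ with $|w|\le c'\eta$ and $\delta\le\psi(\mu_0/\eta_0)\max\{|w|,\eta\}$, where $\mu_0 = M(\lambda'' r,f^n)$, $\eta_0$ is slightly larger than $\eta$, and the constant $c'$ and the rate $\psi$ (with $\psi(t)\to 0$ as $t\to\infty$) depend only on $\lambda,\lambda',\lambda''$. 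Choosing $K$ (depending only on $\lambda,\lambda',\lambda''$) large enough that the covered disc reaches radius $M^n(r,f)$, this produces the required $w\in B(0,M^n(r,f))$ and gives $f^n(A(r,\lambda'' r))\supset B(0,M^n(r,f))\setminus B(w,\epsilon(r)\max\{|w|,\eta\})$.

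\emph{Step 3 and the main obstacle.} Defining $\epsilon(r)$ from the rate $\psi$ evaluated at a quantity that is at least as large as $M(\lambda'' r,f^n)/M(\lambda' r,f^n)$ --- legitimate since $\eta = m(s,f^n)\le M(\lambda' r,f^n)$, so $\mu_0/\eta_0\ge M(\lambda'' r,f^n)/M(\lambda' r,f^n)$ --- Step 1 gives $\epsilon(r)\to 0$ as $r\to\infty$, uniformly in $n$, and in particular shows that the conclusion is never vacuous. I expect the main obstacle to be exactly this uniformity: one must check that every constant in the covering argument of \cite{2013arXiv1301.1328R} (including the rate $\psi$) depends only on the shape of the annulus, hence on $\lambda,\lambda',\lambda''$ alone and not on the individual function $g=f^n$, and that the growth estimates of Step 1 --- above all $M(\lambda'' r,f^n)\ge K\,M^n(r,f)$ and $M(\lambda'' r,f^n)/M(\lambda' r,f^n)\to\infty$ --- hold with thresholds independent of $n$. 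This is precisely the point at which one passes from a single {\tef} to the whole family of its iterates, and it rests on the convexity of $\log M(\cdot,f^n)$ in $\log r$ together with the fact that $M^n(r,f)$ is large and grows faster than linearly, uniformly in $n$ once $r$ is large.
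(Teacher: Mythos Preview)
Your overall strategy---run the $n=1$ argument on $g=f^n$ and verify that all constants are uniform in $n$---matches the paper's, but both of your substantive steps contain errors. In Step~1 you assert $M(r,f^n)\ge M^n(r,f)$; the inequality goes the other way (iterate $|f(w)|\le M(|w|,f)$). What is true, and what the paper uses (Lemma~\ref{Lother}), is $M(dr,f^n)\ge M^n(r,f)$ for any fixed $d>1$ and $r$ large; this suffices to bound $|f^n(\zeta')|$ from below, but the inequality as you wrote it is false and is not what \cite{Rippon01102012} says.

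More seriously, your Step~2 does not describe the proof of the cited result, and the specific claim that ``Jensen's formula forces $g$ to have many zeros in $A(s,\lambda'' r)$'' is wrong: take $f(z)=e^z$, whose minimum modulus on any circle is arbitrarily small but which has no zeros anywhere, so your route to covering the small values collapses. The actual mechanism is hyperbolic rather than arithmetic. If $f^n$ omits two values $w_1,w_2\in B(0,M^n(r,f))$ on a sub-annulus $A=A(s/\tau,s\tau)\subset A(r,\lambda'' r)$, then $\phi\circ f^n$ with $\phi(w)=(w-w_1)/(w_2-w_1)$ maps $A$ into $\mathbb{C}\setminus\{0,1\}$; by Schwarz--Pick the hyperbolic distance in $\mathbb{C}\setminus\{0,1\}$ between the images of the minimum- and maximum-modulus points $\zeta,\zeta'$ on $|z|=s$ is bounded by a constant depending only on $\tau$. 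But $|\phi(f^n(\zeta))|\le 2/\beta$ and $|\phi(f^n(\zeta'))|\ge \psi_c(r)$, where $\beta=|w_2-w_1|/\max\{|w_1|,\eta\}$ and $\psi_c(r)=\tfrac12\bigl(\inf_{n}M^n(cr,f)/M^n(r,f)-1\bigr)$, and the density estimate $\rho_{\mathbb{C}\setminus\{0,1\}}(z)\ge 1/(2|z|\log C|z|)$ then forces $\beta<\epsilon(r)$ with $\epsilon(r)\to 0$. The uniformity in $n$ that you correctly flag as the crux enters precisely because $\psi_c$ is defined via an infimum over all $n$; that $\psi_c(r)\to\infty$ is Lemma~\ref{Llambda}, proved from Corollary~\ref{CMtheo}.
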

%
%
To prove Theorem~\ref{Tcovering} we require some results regarding the maximum modulus of a {\tef}. The first two are well-known:
\begin{equation}
\label{Meq1}
\frac{\log M(r,f)}{\log r} \rightarrow\infty \text{ as } r\rightarrow\infty,
\end{equation}
and
\begin{equation}
\label{Meq2}
\frac{M(cr,f)}{M(r,f)} \rightarrow\infty \text{ as } r\rightarrow\infty, \qfor c>1.
\end{equation}

We require the following \cite[Lemma 2.2]{MR2544754}.
\begin{lemma}
\label{LMtheo}
If $f$ is a {\tef}, then there exists $R_0~=~R_0(f)~>~0$ such that, for all $c>1,$
\begin{equation*}
M(r^c, f) \geq M(r, f)^c, \qfor r\geq R_0.
\end{equation*}
\end{lemma}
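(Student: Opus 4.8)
The plan is to deduce this from the convexity of $\log M(r,f)$ as a function of $\log r$, which is the classical Hadamard three-circles theorem. Write $\phi(t) = \log M(e^t, f)$, so that $\phi$ is a convex, increasing function of $t \in \mathbb{R}$, and by \eqref{Meq1} we have $\phi(t)/t \to \infty$ as $t \to \infty$. The asserted inequality $M(r^c,f) \ge M(r,f)^c$ is, after taking logarithms and setting $t = \log r$, exactly the statement that $\phi(ct) \ge c\,\phi(t)$ for all $c > 1$, provided $t$ is large enough.

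First I would record the convexity consequence: for a convex function $\phi$ and any $t_0 < t$, the slope $(\phi(t) - \phi(t_0))/(t - t_0)$ is non-decreasing in $t$. I would choose $t_0$ (equivalently $R_0 = e^{t_0}$) large enough that $\phi(t_0) \ge 0$, which is possible since $\phi(t) \to \infty$; indeed by \eqref{Meq1} we may even take $t_0$ with $\phi(t_0) > 0$. Then for $t \ge t_0$ and $c > 1$ we have $ct > t > t_0$, and convexity applied to the three points $t_0 < t < ct$ gives
\begin{equation*}
\frac{\phi(ct) - \phi(t)}{ct - t} \ge \frac{\phi(t) - \phi(t_0)}{t - t_0} \ge \frac{\phi(t)}{t},
\end{equation*}
where the last inequality uses $\phi(t_0) \ge 0$ together with $t - t_0 \le t$ and $\phi(t) \ge \phi(t_0) \ge 0$; more carefully, $\frac{\phi(t)-\phi(t_0)}{t-t_0} \ge \frac{\phi(t)-\phi(t_0)}{t}$ and then one wants $\phi(t) - \phi(t_0) \ge \phi(t)\cdot\frac{t-t_0}{t}$, i.e. $\phi(t_0)/\phi(t) \le t_0/t$; this is not automatic, so instead I would simply use $\phi(t_0) \ge 0$ to write $\frac{\phi(t)-\phi(t_0)}{t-t_0} \ge \frac{\phi(t)}{t-t_0} - \frac{\phi(t_0)}{t-t_0}$ and control the error term, OR — cleaner — first enlarge $t_0$ using \eqref{Meq1} so that $\phi(t) \ge \phi(t_0)\cdot t/t_0$ is false... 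I would in fact take the cleanest route below.

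The cleanest route: since $\phi$ is convex and increasing, the chord from $(0,\phi(0))$... but $\phi(0) = \log M(1,f)$ may be negative, so instead use that $\phi(t)/t \to \infty$. Fix $R_0 = e^{t_0}$ with $t_0 > 0$ large enough that $\phi$ is increasing on $[t_0,\infty)$ and $\phi(t_0) \ge 0$. Then the line through the origin and $(t,\phi(t))$ has slope $\phi(t)/t$, and by convexity together with $\phi(t_0) \ge 0 = $ the value of that line is $\ge 0$ at $t_0$... The honest statement I would prove is: convexity of $\phi$ on $[t_0,\infty)$ plus $\phi(t_0)\ge 0$ implies $\phi(\mu t_0) \ge \mu \phi(t_0)$ is the wrong normalization; the correct and standard deduction is that $t \mapsto \phi(t)/t$ is eventually non-decreasing (a consequence of convexity once $\phi$ is positive), and hence $\phi(ct)/(ct) \ge \phi(t)/t$ for $t \ge t_0$ and $c \ge 1$, which rearranges to $\phi(ct) \ge c\,\phi(t)$. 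Exponentiating gives $M(r^c,f) \ge M(r,f)^c$ for $r \ge R_0$.

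The only real point requiring care — and hence the main obstacle — is justifying that $\psi(t) := \phi(t)/t$ is non-decreasing for $t$ large: this is where one needs both convexity of $\phi$ (from Hadamard) and positivity of $\phi$ (from \eqref{Meq1}, which guarantees $\phi(t) > 0$ for large $t$), since for a convex $\phi$ with $\phi(t_1) \ge 0$ one has, for $t_1 \le t_2$, the inequality $\phi(t_2)/t_2 \ge \phi(t_1)/t_1$ by comparing the chords from the origin. Once that monotonicity is in hand, the lemma is immediate. I expect no serious difficulty beyond this; it is genuinely a short argument, and the citation to \cite[Lemma 2.2]{MR2544754} confirms it is standard.
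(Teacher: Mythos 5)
The paper does not supply its own proof of this lemma; it simply cites \cite[Lemma 2.2]{MR2544754}. Your strategy --- Hadamard convexity of $\phi(t) = \log M(e^t,f)$ in $t$ together with (\ref{Meq1}) --- is the right route, but the step on which your whole argument ultimately rests is false as stated. You claim that convexity of $\phi$ together with $\phi(t_1) \ge 0$ forces $\phi(t_2)/t_2 \ge \phi(t_1)/t_1$ for $t_2 \ge t_1$ ``by comparing the chords from the origin.'' This is not so: take $\phi(t) = \max(1,t-10)$, which is convex with $\phi(5) = 1 > 0$, yet $\phi(5)/5 = 1/5 > 1/10 = \phi(10)/10$. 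The chord-from-the-origin argument proves that $\phi(t)/t$ is non-decreasing on $(0,\infty)$ precisely under the hypothesis $\phi(0) \le 0$, and you have conflated that with positivity of $\phi$ somewhere to the right. For $\phi(t)=\log M(e^t,f)$ the value $\phi(0)=\log M(1,f)$ may well be positive, in which case $\phi(t)/t \to +\infty$ as $t\to 0^+$ and the quotient must first \emph{decrease}, so it is certainly not monotone from the first point where $\phi$ is non-negative.

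The gap is closeable, but you must use (\ref{Meq1}) in full strength rather than merely the eventual positivity of $\phi$. For instance: since $\phi$ is continuous and $\phi(t)/t\to\infty$, the function $t\mapsto\phi(t)/t$ attains its infimum over $[1,\infty)$ at some $t_0\ge 1$; set $R_0=e^{t_0}$. Then $\phi(t)/t\ge\phi(t_0)/t_0$ for all $t\ge t_0$, and convexity applied to the triple $t_0<t<ct$ gives
\[
\frac{\phi(ct)-\phi(t)}{(c-1)t}\;\ge\;\frac{\phi(t)-\phi(t_0)}{t-t_0},
\]
which rearranges to $\phi(ct)\ge c\phi(t)$ exactly when $t_0\phi(t)\ge t\phi(t_0)$, i.e.\ when $\phi(t)/t\ge\phi(t_0)/t_0$; this choice of $t_0$ works uniformly in $c>1$. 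Alternatively, one can show the non-decreasing quantity $t\phi'_+(t)-\phi(t)$ (right derivatives; non-decreasing since $\phi'_+$ is) must eventually be positive, as otherwise $(\log\phi)'_+\le 1/t$ would force $\phi(t)/t$ to stay bounded, contradicting (\ref{Meq1}). Either way, the unbounded growth of $\phi(t)/t$ --- not just positivity of $\phi$ --- is the indispensable ingredient, and your sketch does not invoke it at the point where it is actually needed.
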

We deduce the following by (\ref{Meq1}) and repeated application of Lemma~\ref{LMtheo}.
\begin{corollary}
\label{CMtheo}
If $f$ is a {\tef}, then there exists $R_1~=~R_1(f)~>~0$ such that, for all $c>1$ and all $n\isnatural$, 
\begin{equation*}
M^n(r^c, f) \geq M^n(r, f)^c, \qfor r\geq R_1.
\end{equation*}
\end{corollary}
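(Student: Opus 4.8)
The plan is to deduce this from Lemma~\ref{LMtheo} by induction on $n$, with the required lower bound $R_1$ obtained by pushing $R_0$ up slightly using \eqref{Meq1}. First I would choose $R_1 \geq R_0$ large enough that $M(r,f) \geq r$ (equivalently $\log M(r,f)/\log r \geq 1$) for all $r \geq R_1$; this is possible by \eqref{Meq1}. The point of this extra condition is that iterating $M$ does not take us below $R_0$: if $r \geq R_1$ then $M(r,f) \geq r \geq R_1 \geq R_0$, and inductively $M^k(r,f) \geq R_0$ for every $k \isnatural$, so Lemma~\ref{LMtheo} is applicable at every stage of the iteration.

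Now fix $c > 1$ and $r \geq R_1$, and argue by induction on $n$. The case $n = 1$ is exactly Lemma~\ref{LMtheo}. For the inductive step, suppose $M^n(r^c, f) \geq M^n(r,f)^c$. Write $\rho = M^n(r,f)$, so $\rho \geq R_0$ by the remark above, and note that $M$ is increasing in its first argument. Then
\begin{equation*}
M^{n+1}(r^c, f) = M\bigl(M^n(r^c,f), f\bigr) \geq M\bigl(\rho^c, f\bigr) \geq M(\rho, f)^c = M^{n+1}(r,f)^c,
\end{equation*}
where the first inequality uses monotonicity of $M$ together with the inductive hypothesis, and the second is Lemma~\ref{LMtheo} applied at $\rho \geq R_0$ with the same exponent $c$. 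This completes the induction, and hence the proof.

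There is no real obstacle here; the only point requiring care is the one already flagged, namely ensuring that all the intermediate iterates $M^k(r,f)$ stay above the threshold $R_0$ so that Lemma~\ref{LMtheo} may be applied repeatedly — this is precisely why \eqref{Meq1} is invoked to enlarge $R_0$ to a suitable $R_1$.
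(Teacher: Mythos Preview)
Your proof is correct and is exactly the argument the paper has in mind: the paper simply says the corollary follows ``by \eqref{Meq1} and repeated application of Lemma~\ref{LMtheo}'', which is precisely your induction, with \eqref{Meq1} used to guarantee that the iterates $M^k(r,f)$ remain above the threshold $R_0$. There is nothing to add.
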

Next, for a {\tef} $f$ and for $c>1$, we define a function
\begin{equation*}
\psi_c(r) = \frac{1}{2}\left(\inf_{n\isnatural}\frac{M^n(c r, f)}{M^n(r,f)} - 1\right),\qfor r > 0.
\end{equation*}
We need the following lemma regarding the function $\psi_c$.
\begin{lemma}
\label{Llambda}
If $f$ is a {\tef} and $c>1$, then $$\psi_c(r)\rightarrow\infty \text{ as } r\rightarrow\infty.$$
\end{lemma}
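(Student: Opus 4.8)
The plan is to prove the formally stronger statement that
\[
\inf_{n\in\mathbb{N}}\frac{M^n(cr,f)}{M^n(r,f)}\to\infty\qquad\text{as }r\to\infty,
\]
from which $\psi_c(r)\to\infty$ is immediate. (I use the convention $\mathbb{N}=\{1,2,\dots\}$; this matters, since for $n=0$ the ratio above equals $c$ identically.)

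First I would record a uniformity-in-$n$ fact. By (\ref{Meq1}) there is $R_2=R_2(f)>1$ such that $M(s,f)\geq s$ whenever $s\geq R_2$; hence, for $r\geq R_2$, the sequence $n\mapsto M^n(r,f)$ is non-decreasing, so that $M^n(r,f)\geq M(r,f)$ for every $n\in\mathbb{N}$. This reduces an estimate involving all the iterates $M^n(r,f)$ to one involving only $M(r,f)$.

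The essential idea is to absorb the multiplicative scaling $r\mapsto cr$ into raising to a power slightly larger than $1$, so that Corollary~\ref{CMtheo} becomes applicable. For $r>1$ set $c'=c'(r)=1+(\log c)/\log r$; then $c'>1$ and $r^{c'}=cr$. With $R_1=R_1(f)$ as in Corollary~\ref{CMtheo}, for every $r\geq\max\{R_1,R_2\}$ and every $n\in\mathbb{N}$ we get
\[
M^n(cr,f)=M^n\bigl(r^{c'},f\bigr)\geq\bigl(M^n(r,f)\bigr)^{c'}=M^n(r,f)\cdot\bigl(M^n(r,f)\bigr)^{c'-1}.
\]
Since $c'-1=(\log c)/\log r>0$ and $M^n(r,f)\geq M(r,f)$, the right-hand side is at least $M^n(r,f)\cdot\bigl(M(r,f)\bigr)^{(\log c)/\log r}$. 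Dividing by $M^n(r,f)$ and taking the infimum over $n\in\mathbb{N}$,
\[
\inf_{n\in\mathbb{N}}\frac{M^n(cr,f)}{M^n(r,f)}\geq\bigl(M(r,f)\bigr)^{(\log c)/\log r}=\exp\!\left(\log c\cdot\frac{\log M(r,f)}{\log r}\right).
\]
As $c>1$ we have $\log c>0$, and by (\ref{Meq1}) we have $\log M(r,f)/\log r\to\infty$ as $r\to\infty$; hence the right-hand side tends to $\infty$, which establishes the claim and therefore Lemma~\ref{Llambda}.

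I do not expect a real obstacle here. The only point needing care is the uniformity in $n$, and this is handled once and for all by the monotonicity of $n\mapsto M^n(r,f)$ for large $r$, which reduces matters to the trivial bound $M^n(r,f)\geq M(r,f)$; the substitution $c'=1+(\log c)/\log r$ is then exactly what lets Corollary~\ref{CMtheo} be applied, and the final limit is just (\ref{Meq1}) in disguise.
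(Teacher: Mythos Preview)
Your proof is correct and follows essentially the same approach as the paper: rewrite $cr=r^{1+\log c/\log r}$, apply Corollary~\ref{CMtheo} to obtain $M^n(cr,f)/M^n(r,f)\geq M^n(r,f)^{\log c/\log r}\geq M(r,f)^{\log c/\log r}$, and conclude via (\ref{Meq1}). The paper's write-up is more compressed, but the substitution, the use of Corollary~\ref{CMtheo}, and the monotonicity-in-$n$ step are all the same.
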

\begin{proof}
By Corollary~\ref{CMtheo} and by (\ref{Meq1}), there exists $R=R(f)>0$ such that, for all $n\isnatural$ and $c>1$, $$\frac{M^n(c r,f)}{M^n(r,f)} \geq \frac{M^n(r,f)^{1+\log c/\log r}}{M^n(r,f)} = M^n(r,f)^{\log c/\log r} \geq M(r,f)^{\log c/\log r}, \qfor r\geq R.$$ By a second application of (\ref{Meq1}) we see that $M(r,f)^{\log c/\log r}\rightarrow\infty$ as $r\rightarrow\infty$. The result follows.
\end{proof}
We also need the following, which is a version of part of \cite[Lemma 2.6]{Rippon01102012}.
\begin{lemma}
\label{Lother}
If $f$ is a {\tef} and $d>1$, then there exists $R_2=R_2(f,d)>0$ such that,  
\begin{equation*}
M(dr, f^n) \geq M^n(r, f), \qfor r \geq R_2, \ n\isnatural.
\end{equation*}
\end{lemma}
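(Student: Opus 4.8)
The plan is to prove the statement by induction on $n$, using Lemma~\ref{LMtheo} and a standard submultiplicativity-type estimate for iterates. For $n=1$ the claim is trivial (take $R_2 \geq 1$, since $M(dr,f) \geq M(r,f)$ for $d>1$). So I would focus on the inductive step, assuming $M(dr,f^{n-1}) \geq M^{n-1}(r,f)$ for all large $r$ and deducing the corresponding bound for $n$.

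First I would recall the basic identity $f^n = f \circ f^{n-1}$, which gives, for any $\rho>0$,
\begin{equation*}
M(\rho, f^n) = \max_{|z|=\rho} |f(f^{n-1}(z))| \geq M\bigl(m(\rho,f^{n-1}),\, f\bigr),
\end{equation*}
but since we only have control of the maximum modulus of $f^{n-1}$, not its minimum, I would instead use the approach from \cite[Lemma 2.6]{Rippon01102012}: fix a point $z_0$ with $|z_0|=dr$ at which $|f^{n-1}(z_0)| = M(dr, f^{n-1})$, and then observe that on a suitable subarc (or by using the fact that $f^{n-1}$ takes some value of modulus at least $M(d'r, f^{n-1})$ on the circle $|z| = d''r$ for intermediate radii $d < d'' < d$), one can push the estimate through. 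The cleaner route is: choose $1 < d_1 < d$ and apply Lemma~\ref{LMtheo} to write, for $r$ large,
\begin{equation*}
M(dr, f^n) = M\bigl(d \cdot r,\, f \circ f^{n-1}\bigr) \geq M\bigl(M(d_1 r, f^{n-1}),\, f\bigr),
\end{equation*}
which requires knowing that $f^{n-1}$ maps the circle $\{|z|=dr\}$ over the circle $\{|w| = M(d_1 r, f^{n-1})\}$ — this follows because $f^{n-1}(\{|z| \le dr\})$ is a domain whose boundary has maximum modulus $M(dr,f^{n-1}) \ge M(d_1 r, f^{n-1})$ and which contains $f^{n-1}(0)$, so it contains the circle of radius $M(d_1 r, f^{n-1})$ provided that radius exceeds $|f^{n-1}(0)|$, which holds for large $r$ by \eqref{Meq1}.

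Then by the inductive hypothesis (with $d$ replaced by $d_1$, giving some threshold $R_2(f,d_1)$), $M(d_1 r, f^{n-1}) \geq M^{n-1}(r,f)$ for $r$ large enough, and by monotonicity of $M(\cdot,f)$,
\begin{equation*}
M(dr, f^n) \geq M\bigl(M^{n-1}(r,f),\, f\bigr) = M^n(r,f),
\end{equation*}
as required. The one subtlety I would have to track carefully is uniformity of the threshold $R_2$ in $n$: the naive induction produces a sequence of thresholds $R_2(f,d_1), R_2(f,d_1^2), \dots$ that could grow with $n$. The fix, exactly as in \cite[Lemma 2.6]{Rippon01102012}, is to not decrease the dilation factor at each step but instead to split the available "room" between $r$ and $dr$ using a fixed geometric factor at each stage — e.g. replace $d$ by $d^{1/2}$ at the outermost step only and absorb the rest into a single large $R_2$ depending on $d$ but not $n$, using \eqref{Meq2} to guarantee that one such factor of slack suffices for all $n$ simultaneously once $r$ is large. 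Verifying this uniformity is the main obstacle; everything else is a routine unwinding of the iterate identity together with Lemma~\ref{LMtheo} and \eqref{Meq1}.
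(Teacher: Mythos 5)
The central step in your inductive argument is the covering claim: that $f^{n-1}\bigl(\overline{B(0,dr)}\bigr)$ contains the circle $\{\,|w|=M(d_1r,f^{n-1})\,\}$, because it is a domain containing $f^{n-1}(0)$ whose \emph{boundary has maximum modulus} $M(dr,f^{n-1})\ge M(d_1r,f^{n-1})$. This deduction does not hold. Knowing that a bounded domain $D$ contains a point of small modulus and that $\partial D$ reaches out to large modulus tells you only that $\overline{D}$ \emph{meets} every circle $\{\,|w|=\rho\,\}$ for $\rho$ between those two values; it does not tell you that $D$ \emph{contains} such a circle. What would suffice is a lower bound on the \emph{minimum} modulus of $\partial D$: if $\min_{\partial D}|w|>\rho>|f^{n-1}(0)|$ then indeed $\overline{B(0,\rho)}\subset D$. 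But $\partial D\subset f^{n-1}(\{|z|=dr\})$, and there is no reason for the minimum modulus of this image curve to be large — the image of a disc under an iterate of a transcendental entire function can be a highly non-convex set with "holes" at moduli well below $M(dr,f^{n-1})$, so the claimed inclusion of the circle is not justified and, as stated, is false in general. This is the crux of the matter: with this step removed the chain $M(dr,f^n)\ge M\bigl(M(d_1r,f^{n-1}),f\bigr)$ collapses, since the point of modulus $M(d_1r,f^{n-1})$ that you do know lies in the image need not be anywhere near a maximum-modulus point of $f$.

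Your closing remarks about making $R_2$ uniform in $n$ address a real but secondary bookkeeping issue; they do not repair the covering step. A genuine proof has to supply the missing ingredient — some form of covering or minimum-modulus argument that produces a point in $f^{n-1}\bigl(\overline{B(0,dr)}\bigr)$ at which $|f|$ is provably large. The paper handles this by simply invoking the proof of the corresponding part of Rippon–Stallard's Lemma~2.6 in \cite{Rippon01102012} (the case $d=2$), which is set up precisely to avoid the pitfall above, and notes that the argument goes through verbatim for general $d>1$. Your appeal to Lemma~\ref{LMtheo}, the base case, and the identity $f^n=f\circ f^{n-1}$ are all fine; it is the "cleaner route" covering claim that is the gap.
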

\begin{proof}
The proof of this result is identical to the proof of the corresponding part of \cite[Lemma 2.6]{Rippon01102012}, which gives the case that $d = 2$.
\end{proof}
%
%
We also require some facts about the hyperbolic metric. If $G$ is a hyperbolic domain and $z_1, z_2 \in G$, then we denote the density of the hyperbolic metric in $G$ at $z_1$ by $\rho_G(z_1)$, and the hyperbolic distance between $z_1$ and $z_2$ in $G$ by $[z_1 ,z_2]_G$. 
By \cite[Theorem 9.13]{MR1049148}, there is an absolute constant $C > 1$ such that, with a suitable normalization of hyperbolic density, we have
\begin{equation}
\label{C1def}
\rho_{\mathbb{C}\backslash\{0,1\}}(z) \geq \frac{1}{2|z|\log(C|z|)}, \qfor z\in\mathbb{C}\backslash\{0,1\}.
\end{equation}
For each $\tau>1$, we define a constant $D_\tau>1$ such that
\begin{equation}
\label{C0def}
\frac{1}{2} \log D_\tau = \max_{\{z,z': |z|=|z'|=1\}} [z,z']_{A(1/\tau,\tau)}.
\end{equation}

We now prove Theorem~\ref{Tcovering}. 
\begin{proof}[Proof of Theorem~\ref{Tcovering}]
Suppose that $f$ is a {\tef} and that $1<\lambda<\lambda'<\lambda''.$ Set $$\tau = \min\left\{\lambda, \frac{\lambda''}{\lambda'}\right\} > 1 \quad\text{and}\quad c = \frac{1 + \lambda}{2}>1.$$ 

By Lemma~\ref{Llambda}, we can choose $R'$ sufficiently large that $\psi_{c}(r) > C^{D_\tau-1}$, for $r\geq R'$. We also assume that $R' \geq R_2$, where $R_2=R_2(f,d)$ is the constant from Lemma~\ref{Lother} with $d = \frac{\lambda}{c} > 1$.

Define the function $\epsilon: \mathbb{R}\to\mathbb{R}$ by
\begin{equation}
\label{epsilondef}
\epsilon(r) = \frac{2C^{1-1/D_\tau}}{\psi_{c}(r)^{1/D_\tau}},\qfor r \geq R'.
\end{equation}
Note that, since $C>1$ and $D_\tau >1$, 
\begin{equation}
\label{epqeq}
2/\epsilon(r) < \psi_{c}(r), \qfor r\geq R'.
\end{equation}
Note also, by Lemma~\ref{Llambda}, that $\epsilon(r)\rightarrow 0$ as $r\rightarrow\infty$.

Suppose that there exists $r\geq R'$, $n\isnatural$, $\eta\geq 0$ and $s \in (\lambda r, \lambda' r)$ such that $m(s, f^n) \leq \eta$. We choose $\zeta$ and $\zeta'$ such that $|\zeta|=|\zeta'|=s$, and, by Lemma~\ref{Lother},
\begin{equation}
\label{zetaeq}
|f^n(\zeta)| \leq \eta \text{ and } |f^n(\zeta')| = M(s,f^n) \geq M^n(cr,f).
\end{equation}
Set $A = A(s/\tau,s\tau) \subset A(r, \lambda'' r)$. Suppose, by way of contradiction, that $f^n|_{A}$ omits values $w_1, w_2 \in B(0, M^n(r,f))$ such that
\begin{equation}
\label{wdiff}
|w_2-w_1| = \beta \max\{|w_1|, \eta\} \text{ where } \beta \geq \epsilon(r).
\end{equation}
It follows by the contraction of the hyperbolic metric \cite[Theorem 4.1]{MR1230383} that
\begin{equation}
\label{C0ineq}
[\zeta,\zeta']_{A} \geq [f^n(\zeta),f^n(\zeta')]_{f^n(A)} > [\phi(f^n(\zeta)),\phi(f^n(\zeta'))]_{\mathbb{C}\backslash\{0,1\}},
\end{equation}
where $\phi(w) = (w-w_1)/(w_2-w_1)$. By (\ref{zetaeq}) and (\ref{wdiff}) we have $$|\phi(f^n(\zeta))| \leq \frac{|f^n(\zeta)|+|w_1|}{|w_2-w_1|} \leq \frac{\eta+|w_1|}{\beta\max\{|w_1|,\eta\}} \leq \frac{2}{\beta}$$ and $$|\phi(f^n(\zeta'))| \geq \frac{|f^n(\zeta')|-|w_1|}{|w_2|+|w_1|} \geq \frac{M^n(cr,f)-M^n(r,f)}{2M^n(r,f)} \geq \psi_{c}(r).$$

Note that, by (\ref{epqeq}) and (\ref{wdiff}), we have that $\psi_{c}(r) > 2/\beta$. Suppose first that $2/\beta \geq 1$. Then, by (\ref{C1def}), (\ref{C0def}) and (\ref{C0ineq}) we deduce that $$\frac{1}{2}\log D_\tau > \int_{2/\beta}^{\psi_{c}(r)} \frac{dt}{2t \log(Ct)} = \frac{1}{2}\log\frac{\log(C\psi_{c}(r))}{\log(2C/\beta)},$$ and hence $\beta < \epsilon(r)$, which is a contradiction. On the other hand, if $2/\beta < 1$, then we deduce similarly that $$\frac{1}{2}\log D_\tau > \int_{1}^{\psi_{c}(r)} \frac{dt}{2t \log(Ct)} = \frac{1}{2}\log\frac{\log(C\psi_{c}(r))}{\log C},$$ which is a contradiction to our choice of $R'$.
\end{proof}
In our proof of Theorem~\ref{Tmain}, we use the following immediate corollary of Theorem~\ref{Tcovering}. This is a generalisation of \cite[Corollary 2.3]{2013arXiv1301.1328R}, which considered the case $n=1$. Here we use the following notation for a closed annulus $$\overline{A}(r_1, r_2) = \{ z : r_1 \leq |z| \leq r_2 \}, \qfor 0 < r_1 < r_2.$$
\begin{corollary}
\label{Ccovering}
Suppose that $f$ is a {\tef}. Then there exists $R_3=R_3(f)>0$ such that the following holds. If there exists $r \geq R_3$, $n\isnatural$ and $s \in (2r, 4r)$ such that $m(s, f^n) \leq 1$, and $S, \ S', \ T, \ T'$ satisfy $$2 < S < S', \ T < T' < M^n(r, f) \text{ and } S' \leq \frac{1}{2}T,$$ then $f^n(A(r,8r))$ contains $\overline{A}(S,S')$ or $\overline{A}(T,T').$
\end{corollary}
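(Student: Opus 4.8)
The plan is to apply Theorem~\ref{Tcovering} with the parameters $\lambda=2$, $\lambda'=4$, $\lambda''=8$ and with $\eta=1$, and then to run an elementary separation argument showing that the disc $B(w,\epsilon(r)\max\{|w|,1\})$ which Theorem~\ref{Tcovering} removes from the image can meet at most one of the two closed annuli $\overline{A}(S,S')$ and $\overline{A}(T,T')$.

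First I would invoke Theorem~\ref{Tcovering} (with $1<2<4<8$) to obtain $R'>0$ and the function $\epsilon$, and then, using that $\epsilon(r)\to 0$ as $r\to\infty$, fix $R_3\geq R'$ large enough that $\epsilon(r)<1/3$ for all $r\geq R_3$. Given $r\geq R_3$, $n\isnatural$ and $s\in(2r,4r)$ with $m(s,f^n)\leq 1$, the theorem applied with $\eta=1$ yields $w\in B(0,M^n(r,f))$ with
$$f^n(A(r,8r))\supset B(0,M^n(r,f))\setminus B(w,\rho),\qquad \rho:=\epsilon(r)\max\{|w|,1\}.$$

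Next I would bound the moduli of the points of the omitted disc $B(w,\rho)$. If $|w|<1$ then $\rho=\epsilon(r)<1/3$, so every point of $B(w,\rho)$ has modulus $<1+1/3<2<S$, and $B(w,\rho)$ meets neither annulus. If $|w|\geq 1$ then $\rho=\epsilon(r)|w|$, so every point of $B(w,\rho)$ has modulus strictly between $(1-\epsilon(r))|w|$ and $(1+\epsilon(r))|w|$; were $B(w,\rho)$ to meet both $\overline{A}(S,S')$ and $\overline{A}(T,T')$ we would need $(1-\epsilon(r))|w|<S'$ and $T<(1+\epsilon(r))|w|$, and together with the gap hypothesis $S'\leq\tfrac12 T$ this forces $1-\epsilon(r)<\tfrac12(1+\epsilon(r))$, i.e.\ $\epsilon(r)>1/3$, contradicting the choice of $R_3$. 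Hence in all cases $B(w,\rho)$ is disjoint from at least one of $\overline{A}(S,S')$ and $\overline{A}(T,T')$.

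Finally, since $S'\leq\tfrac12 T<\tfrac12 T'<M^n(r,f)$ and $T'<M^n(r,f)$, both closed annuli lie in $B(0,M^n(r,f))$; so whichever of them misses $B(w,\rho)$ is contained in $B(0,M^n(r,f))\setminus B(w,\rho)\subset f^n(A(r,8r))$, which is the assertion. The deduction is essentially routine; the one point to handle with a little care is ensuring that the range of moduli attained on the omitted disc is genuinely narrower than the annular gap guaranteed by $S'\leq\tfrac12 T$, which is exactly what the smallness $\epsilon(r)<1/3$ provides, and this is the closest thing to an obstacle here.
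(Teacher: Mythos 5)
Your proof is correct and is essentially the intended argument: the paper states Corollary~\ref{Ccovering} as an immediate consequence of Theorem~\ref{Tcovering} without giving details, and your choice of $\lambda=2$, $\lambda'=4$, $\lambda''=8$, $\eta=1$, together with fixing $R_3$ so that $\epsilon(r)<1/3$ and the observation that the omitted disc has narrow modulus range compared to the gap $S'\leq\tfrac12 T$, is the natural way to extract the corollary.
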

\section{Properties of $A'(f)$ and $A''(f)$}
\label{S2}
In this section we prove Theorem~\ref{Ttiny} and Theorem~\ref{Tmain}.
\begin{proof}[Proof of Theorem~\ref{Ttiny}]
Adapting the notation in \cite{MR1662226}, let $\mathcal{M}(f)$ be the set of points where a {\tef}, $f$, attains its maximum modulus; that is,
\begin{equation}
\label{Mdef}
\mathcal{M}(f) = \{ z : |f(z)| = M(|z|,f) \}.
\end{equation} 
It is well-known -- see, for example, \cite[Theorem 10]{valironlectures} -- that $\mathcal{M}(f)$ consists of, at most, a countable union of \itshape maximal curves, \normalfont which are analytic except possibly at their endpoints. 

If $z \in A'(f)$, where $f$ is a {\tef}, then, by (\ref{Adashdef}), there exists $N\isnatural$ such that 
\begin{equation}
\label{Aeq}
f^n(z)\in\mathcal{M}(f), \qfor n\geq N.
\end{equation}
In particular, it follows that, 
\begin{equation*}
A'(f) \subset \bigcup_{k=0}^\infty f^{-k}(\mathcal{M}(f)).
\end{equation*}
Hence $A'(f)$ is contained in a countable union of curves, which are analytic except possibly at their endpoints, as required.
\end{proof}
%
%
In order to prove Theorem~\ref{Tmain} we require a number of preliminary results, the first of which is a version of \cite[Lemma 1]{rippon-2008}, which considered images of sets under a single iteration of $f$. 
\begin{lemma}
\label{LemmaRS}
Suppose that $(E_n)_{n\isnatural}$ is a sequence of compact sets and $(m_n)_{n\isnatural}$ is a sequence of integers. Suppose also that $f$ is a {\tef} such that $E_{n+1} \subset f^{m_n}(E_n )$, for $n\isnatural$. Set $p_n = \sum_{k=1}^n m_k$, for $n\isnatural$. Then there exists $\zeta\in E_1$ such that 
\begin{equation}
\label{feq}
f^{p_n}(\zeta) \in E_{n+1}, \qfor n\isnatural.
\end{equation}
If, in addition, $E_n \cap J(f) \ne \emptyset$, for $n\isnatural$, then there exists $\zeta \in E_1 \cap J(f)$ such that (\ref{feq}) holds.
\end{lemma}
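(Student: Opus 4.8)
The plan is to construct $\zeta$ as the intersection of a decreasing sequence of nonempty compact sets obtained by pulling back the $E_n$ along the iterates of $f$. First I would define, for each $n \isnatural$, the set
\[
F_n = \{ z \in E_1 : f^{p_k}(z) \in E_{k+1} \text{ for } 1 \le k \le n \}.
\]
The key point is that each $F_n$ is nonempty and compact, and that $F_{n+1} \subset F_n$. Compactness is immediate since $F_n$ is the intersection of $E_1$ with finitely many preimages of compact sets under continuous maps, hence closed and bounded. For nonemptiness I would argue inductively: assuming $F_n \ne \emptyset$, pick a point $w \in E_{n+1}$; since $E_{n+2} \subset f^{m_{n+1}}(E_{n+1})$ we can, working backwards, realise a point of $E_{n+1}$ whose $f^{m_{n+1}}$-image lies in $E_{n+2}$, and then use the hypothesis $E_{n+1} \subset f^{m_n}(E_n)$ repeatedly back down to $E_1$. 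More carefully, the cleanest formulation is to show by downward induction on $j$ that the set $\{ z \in E_j : f^{q} (z) \in E_{k+1} \text{ for the appropriate partial sums } q \text{ up to level } n+1 \}$ is nonempty, using at each stage that $E_{j} \subset f^{m_{j-1}}(E_{j-1})$ lets us lift a point of $E_j$ to a preimage in $E_{j-1}$. Granting the chain $F_1 \supset F_2 \supset \cdots$ of nonempty compacta, the finite intersection property gives a point $\zeta \in \bigcap_{n} F_n$, and by construction $f^{p_n}(\zeta) \in E_{n+1}$ for all $n \isnatural$, which is \eqref{feq}.

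For the second statement I would replace $E_n$ throughout by $E_n \cap J(f)$. Since $J(f)$ is closed and completely invariant, each $E_n \cap J(f)$ is compact, and the hypothesis $E_{n+1} \subset f^{m_n}(E_n)$ together with complete invariance of $J(f)$ upgrades to $E_{n+1} \cap J(f) \subset f^{m_n}(E_n \cap J(f))$: indeed if $z \in E_{n+1} \cap J(f)$ then $z = f^{m_n}(y)$ for some $y \in E_n$, and since $J(f)$ is backward invariant, $y \in J(f)$, so $y \in E_n \cap J(f)$. Applying the first part of the argument to the sequence $(E_n \cap J(f))$ yields $\zeta \in E_1 \cap J(f)$ with $f^{p_n}(\zeta) \in E_{n+1} \cap J(f) \subset E_{n+1}$, as required.

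I expect the main obstacle to be presenting the backward-lifting step cleanly: one has to be careful that $p_n = \sum_{k=1}^n m_k$ and that the preimages are taken level by level (first a preimage in $E_n$ under $f^{m_n}$, then a preimage of that in $E_{n-1}$ under $f^{m_{n-1}}$, and so on), so that the composition lands back in $E_1$ with the exponents matching the partial sums $p_k$. This is purely bookkeeping — there is no analytic difficulty once the nesting and compactness are set up — and it is exactly the point where the hypothesis $E_{n+1} \subset f^{m_n}(E_n)$ (rather than equality, or an inclusion in the other direction) is used. The structure mirrors the single-iterate argument of \cite[Lemma 1]{rippon-2008}, the only new feature being that $f^{m_n}$ replaces $f$ and that one tracks the partial sums $p_n$.
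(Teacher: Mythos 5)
Your proposal is correct and takes essentially the same approach as the paper: you define the same nested sequence of compact sets $F_n$, use the finite intersection property, and for the second part pass to $E_n \cap J(f)$ using backward invariance of $J(f)$ in precisely the way the paper does.
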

\begin{proof}
For $n\isnatural$, we set $$F_n = \{z \in E_1 : f^{p_1}(z) \in E_2,  f^{p_2}(z) \in E_3, \ldots, f^{p_n} (z) \in E_{n+1} \}.$$ It follows by hypothesis that $(F_n)_{n\isnatural}$ is a decreasing sequence of non-empty compact subsets of $E_1$, and so $F = \bigcap_{k=1}^\infty F_k$ is a non-empty subset of $E_1$. We choose $\zeta \in F$ and the result follows.

Since $J(f)$ is completely invariant, the second statement follows by applying the first statement to the non-empty compact sets $E_n \cap J(f)$, for $n\isnatural$.
\end{proof}
%
%
We require some results concerning multiply connected Fatou components. The first is the following well-known result of Baker \cite[Theorem 3.1]{MR759304}. We say that a set $U$ surrounds a set $V$ if $V$ is contained in a bounded component of $\mathbb{C}\backslash U$. If $U$ is a Fatou component, we write $U_n$, $n\geq 0$, for the Fatou component containing $f^n(U)$. We also let \rmfamily dist\normalfont$(z, U) = \inf_{w\in U} |z-w|$, for $z\iscomplex$.
\begin{lemma}
\label{Lbaker}
Suppose that $f$ is a {\tef} and that $U$ is a multiply connected Fatou component of $f$. Then each $U_n$ is bounded and multiply connected, $U_{n+1}$ surrounds $U_n$ for large $n$, and \upshape\rmfamily~dist\normalfont$(0, U_n)\rightarrow\infty$ as $n\rightarrow\infty$.
\end{lemma}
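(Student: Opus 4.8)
Since this is exactly \cite[Theorem~3.1]{MR759304}, one could simply appeal to that reference; but I sketch how I would argue, following Baker, in three stages. \emph{Stage 1 (a curve and a Julia point inside it).} Because $U$ is multiply connected there is a Jordan curve $\gamma\subset U$ that is not null-homotopic in $U$. Writing $G$ for the bounded component of $\mathbb{C}\setminus\gamma$, this means $G\not\subset U$; and since $\partial G=\gamma\subset U$ while $G$ is connected, $G$ must meet $\partial U\subset J(f)$, so I may fix $\alpha\in G\cap J(f)$. Everything is driven by this point, which lies inside $\gamma$ yet in the Julia set.

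\emph{Stage 2 (the crux): $f^n\to\infty$ locally uniformly on $U$.} Since $\gamma$ is a compact subset of $F(f)$, the family $\{f^n|_\gamma\}$ is normal; if the assertion failed, some subsequence $f^{n_j}$ would converge uniformly on $\gamma$ to a finite holomorphic limit and hence be uniformly bounded on $\gamma$, so the maximum principle would bound $\{f^{n_j}\}$ uniformly on $\overline{G}$, making this subsequence normal near $\alpha$. The delicate step --- and the one I expect to be the main obstacle --- is that normality of a \emph{subsequence} near $\alpha$ does not by itself contradict $\alpha\in J(f)$. Following Baker, I would close this gap with a winding-number argument: as $\alpha\in G$, the curve $f^{n_j}(\gamma)$ winds at least once about $f^{n_j}(\alpha)$, and on refining so that $f^{n_j}\to\phi$ locally uniformly on $U$ and $f^{n_j}\to g$ locally uniformly near $\alpha$ this property survives in the limit; propagating it to the sequences $\{f^{n_j+p}\}_j$ for all $p\ge 0$ yields enough uniform control of the full orbit of a neighbourhood of $\alpha$ to force $\alpha\in F(f)$, a contradiction. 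Granting the escape property, $U\subset I(f)$ and $U$ is a wandering domain, so the $U_n$ are pairwise disjoint; moreover $\partial U_n\subset J(f)$ while $J(f)$ is unbounded, and a short topological argument then shows that each $U_n$ is bounded.

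\emph{Stage 3 (surrounding and escape to infinity).} Here one works with round essential annuli. Using the escape property one shows that, for all large $n$, $U_n$ contains a round annulus $\{\rho_n<|z|<\sigma_n\}$ with $\rho_n\to\infty$ and $\sigma_n/\rho_n\to\infty$. Applying $f$ and invoking the classical modulus estimates --- $\log M(r,f)/\log r\to\infty$ by (\ref{Meq1}), $M(cr,f)/M(r,f)\to\infty$ by (\ref{Meq2}), together with a minimum-modulus (cosine) theorem used to select a radius $t_n$ in the annulus on which $f$ is large in modulus --- one finds inside $U_{n+1}$ a curve lying beyond a circle of radius tending to infinity that winds about $0$ and surrounds a large disc about the origin, and in particular surrounds $U_n$. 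Since the $U_n$ are pairwise disjoint and connected, this forces $U_{n+1}$ to surround $U_n$ for all large $n$, and forces the disc into the bounded complementary component of $U_{n+1}$, so that $\mathrm{dist}(0,U_n)\to\infty$. The genuinely technical point in this stage is the passage ``large round essential annulus in $U_n$ $\longmapsto$ curve surrounding a large disc inside $U_{n+1}$'', which is where the minimum-modulus theorem and the estimates (\ref{Meq1}), (\ref{Meq2}) do the real work; everything else is bookkeeping with connectedness and winding numbers.
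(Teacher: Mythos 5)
The paper does not prove this lemma; it is recorded simply as a citation to Baker, \cite[Theorem~3.1]{MR759304}, which is exactly how you open, so as far as the paper's own treatment is concerned your approach agrees with it.

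A remark on your reconstruction of Baker's argument is still worth making, because the winding-number mechanism you propose to close Stage~2 does not, as written, deliver the contradiction. Writing $f^n=f^p\circ f^{n_j}$ with $p=n-n_j\ge 0$, the gaps $n_{j+1}-n_j$ may be unbounded, so uniform control of $\{f^{n_j+p}\}_j$ for each \emph{fixed} $p$ does not propagate to uniform control of the full orbit of a neighbourhood of $\alpha$; the winding of $f^{n_j}(\gamma)$ about $f^{n_j}(\alpha)$ adds nothing here. The standard way to finish this step avoids winding numbers entirely and uses the density of repelling periodic points in $J(f)$: choose $\beta\in G\cap J(f)$ of some period $p$ and a disc $B(\beta,\delta)\subset G$. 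Then $|(f^n)'(\beta)|\ge c\,|(f^p)'(\beta)|^{\lfloor n/p\rfloor}\to\infty$ as $n\to\infty$, with $c=\min_{0\le r<p}|(f^r)'(\beta)|>0$, whereas Cauchy's estimate on $B(\beta,\delta)$ gives $|(f^{n_j})'(\beta)|\le M/\delta$ once $|f^{n_j}|\le M$ on $\overline{G}$; this is the desired contradiction. Your Stage~3 sketch is in the right spirit, but since the paper treats the entire statement as a quotation of Baker's theorem, the citation alone is what the paper actually requires.
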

In addition we require \cite[Theorem 4.4]{Rippon01102012}.
\begin{lemma}
\label{mconnlemm}
Suppose that $f$ is a {\tef}, and that $U$ is a multiply connected Fatou component of $f$. Then $\overline{U}\subset A(f)$.
\end{lemma}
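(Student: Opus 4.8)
The plan is to reduce the assertion to a single growth estimate for the distances $d_n:=\operatorname{dist}(0,U_n)$, and then to prove that estimate by combining Lemma~\ref{Lbaker} with the classical geometric description of multiply connected Fatou components. For the reduction, observe that $f^n(\overline U)\subseteq\overline{U_n}$, so every $z\in\overline U$ satisfies $|f^n(z)|\ge d_n$ for all $n$; hence, by the definition (\ref{Adef}) of $A(f)$, it suffices to find $N\isnatural$ such that
\[ d_{n+1}\ge M(d_n,f),\qfor n\ge N. \]
Indeed, iterating this gives $d_n\ge M^{n-N}(d_N,f)$ for $n\ge N$, so with $R=d_N$ (which is large, so that $M^n(R,f)\to\infty$) and $\ell=N$ we obtain $|f^{n+\ell}(z)|\ge d_{n+\ell}\ge M^n(R,f)$ for all $n$, whence $z\in A(f)$. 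Should the estimate only come out up to a bounded multiplicative factor, or a bounded shift in $n$, one can still conclude, since membership in $A(f)$ is insensitive to such modifications; but I would aim for the clean inequality.

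To establish the displayed inequality I would use two facts, both valid for all sufficiently large $n$ (legitimate by Lemma~\ref{Lbaker}). The first is purely topological. \emph{If $\{|z|=\rho\}\subseteq U_n$ for some $\rho>0$, then $d_{n+1}\ge\rho$.} To see this, let $K$ be the component of $\mathbb{C}\setminus U_{n+1}$ containing $U_n$; since $U_{n+1}$ surrounds $U_n$, $K$ is bounded, and $K$ contains the circle $\{|z|=\rho\}$. Now $U_{n+1}$ must have a point of modulus exceeding $\rho$ (otherwise $\{|z|\ge\rho\}$ would lie in the unbounded complementary component of $U_{n+1}$, contradicting $\{|z|=\rho\}\subseteq K$), and it has points of modulus less than $\rho$ iff it meets the disc $\{|z|<\rho\}$; but any path in $U_{n+1}$ from a point inside that circle to a point outside it would cross $\{|z|=\rho\}\subseteq U_n$, which is impossible. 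Hence $\{|z|\le\rho\}$ is disjoint from $U_{n+1}$, and being connected with its boundary circle in $K$ it lies in $K$; so $d_{n+1}\ge\rho$.

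The second fact is the geometric input, and this is where I expect the real work to lie: for large $n$, $U_n$ contains a round annulus $A(r_n,R_n)$ with $r_n\to\infty$ and whose outer radius is enormous compared with its inner radius — large enough that $R_n\ge M(r_n,f)$. This is essentially the content of the fine structure theory of multiply connected Fatou components (Baker, with refinements of Bergweiler, Rippon and Stallard): since $f(U_n)\subseteq U_{n+1}$ and $0\notin U_{n+1}$ for large $n$, the map $f$ omits $0$ on $U_n$ and is bounded below there, so that on the relevant scales $f$ behaves on $U_n$ like a power of $z$; one then tracks how a round annulus inside $U_n$ is expanded by $f$ — using (\ref{Meq2}) and Lemma~\ref{Lother} to quantify the spreading, together with a $\cos\pi\rho$-type estimate to locate radii on which $m(\cdot,f)$ is comparable to $M(\cdot,f)$ — so that the next generation contains an even wider round annulus. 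Granting this, and noting that $r_n\ge d_n$ because $A(r_n,R_n)\subseteq U_n\subseteq\{|z|>d_n\}$, the topological fact applied to any $\rho\in(r_n,R_n)$ gives
\[ d_{n+1}\ge R_n\ge M(r_n,f)\ge M(d_n,f), \]
which is exactly what is needed; the extension from $U$ to $\overline U$ is automatic, since $|f^n(z)|\ge d_n$ already holds on $\overline{U_n}$.

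I should stress that, in contrast with the treatment of Theorem~\ref{Tmain} in the case of no multiply connected Fatou component, this argument does not proceed through Theorem~\ref{Tcovering}: on a round annulus $A\subseteq U_n$ every iterate $f^k$ maps $A$ into $\overline{U_{n+k}}$, which avoids a large disc about $0$, so $m(s,f^k)$ is large for the relevant radii $s$ and the hypothesis (\ref{funnyeq1}) of the covering result is never met there. Thus the main obstacle is genuinely the annulus-geometry step — controlling quantitatively how fast $f$ widens the round annuli inside the $U_n$, in the form $R_n\ge M(r_n,f)$ — for which one must invoke, or reprove, the classical value-distribution and geometric analysis of multiply connected Fatou components.
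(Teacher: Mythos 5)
Lemma~\ref{mconnlemm} is not proved in this paper at all: it is quoted verbatim as \cite[Theorem 4.4]{Rippon01102012}, so there is no internal proof to compare your attempt against. What you have written is therefore a sketch of how one might reprove Rippon and Stallard's theorem from first principles, and it has to be judged on that basis.

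The reduction to the inequality $d_{n+1}\ge M(d_n,f)$, and the topological step (which silently uses that the $U_n$ are pairwise disjoint, i.e.\ that multiply connected Fatou components wander — true, by Baker, and fair to assume), are both fine. The gap is where you yourself flag it: the claim that, for large $n$, $U_n$ contains a \emph{round} annulus $A(r_n,R_n)$ with $R_n\ge M(r_n,f)$. This is not ``essentially the content'' of the classical geometric theory, and it does not follow from the tools you list. The pre-2012 results (Baker, Zheng, Bergweiler) give that $U_n$ contains round annuli whose \emph{moduli} tend to infinity, i.e.\ $\log(R_n/r_n)\to\infty$; but by (\ref{Meq1}) we also have $\log M(r_n,f)/\log r_n\to\infty$, and one divergence does not dominate the other. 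The $\cos\pi\rho$ theorem, (\ref{Meq2}) and Lemma~\ref{Lother} let you compare $m(\cdot,f)$ with $M(\cdot,f)$ at favourable radii, but turning that into a lower bound for the round-annulus width in $U_{n+1}$ already presupposes quantitative control of how $f$ expands the round annulus in $U_n$ — which is precisely the substance of Rippon and Stallard's Theorem~4.4 (and of the Bergweiler--Rippon--Stallard fine-structure theory that postdates it). So as it stands the argument has a genuine hole at its central step: the decisive estimate is asserted, not derived, and is not available off the shelf from the sources you indicate.
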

%
%
We also require some notation and results from \cite{Sixsmith3}. Define a function $R_A$ by
\begin{equation}
\label{RAdef}
R_A(z) =  \max \{R\geq 0 : M^n(R,f)\rightarrow\infty \text{ as } n\rightarrow\infty \text{ and } |f^n(z)| \geq M^n(R,f), \text{for } n\isnatural\},
\end{equation}
where we set $R_A(z) = -1$ if the set on the right-hand side of (\ref{RAdef}) is empty. If the set on the right-hand side of (\ref{RAdef}) is not empty, then the existence of a maximum follows from the continuity of $M(R,f)$.

If $U$ is a multiply connected Fatou component which surrounds the origin, then we define $\intb U$ as the boundary of the component of $\mathbb{C}\backslash\overline{U}$ that contains the origin. We use the following lemma, which is a combination of \cite[Lemma 4.2(c)]{Sixsmith3} and \cite[Lemma 4.4(a)]{Sixsmith3}. 
\begin{lemma}
\label{Lnhd}
Suppose that $f$ is a {\tef}. Then there exists $R_4=R_4(f)>0$ such that the following holds. Suppose that $U$ is a multiply connected Fatou component of $f$, which surrounds the origin and which satisfies \upshape\rmfamily~dist\itshape$(0,U)\geq R_4$. There there exists $R'\geq 0$ such that $$R_A(z) = R', \qfor z \in \intb U.$$
\end{lemma}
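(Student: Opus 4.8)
The plan is to show that $R_A$ is constant on $\intb U$ by means of the representation $R_A(z)=\lim_{n\to\infty}a_n(z)$, valid for $z\in\overline{U}$, where $a_n(z):=M^{-n}(|f^n(z)|,f)$ and $M^{-n}$ is the inverse of the $n$th iterate of $r\mapsto M(r,f)$ (well defined, since $M(\cdot,f)$ is strictly increasing, on a half-line $[R_1,\infty)$ on which $M(r,f)>r$). The sequence $(a_n(z))_n$ is decreasing because $|f^{n+1}(z)|\le M(|f^n(z)|,f)$, so the limit exists and equals $\inf_n a_n(z)$, which one checks is exactly $R_A(z)$. For $z\in\intb U$ I would trap $|f^n(z)|$ between the inner and outer radii of the ``hole'' of $U_n$, and then show that the resulting interval of possible values of $a_n(z)$ shrinks to a point; since the sequences $a_n(\cdot)$ on $\intb U$ then all have the same limit, $R_A$ is constant there.

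By Lemma~\ref{Lbaker} we may choose $R_4$ so large that, whenever $\mathrm{dist}(0,U)\ge R_4$: every $U_n$ is bounded and multiply connected; $U_{n+1}$ surrounds $U_n$ for \emph{every} $n\ge 0$, not merely for large $n$; and $\mathrm{dist}(0,U_n)\ge M^n(R_1,f)$ for all $n$, where $R_1=R_1(f)>1$ is the constant of Corollary~\ref{CMtheo}, taken large enough also that $M^n(R_1,f)\to\infty$. Let $H_n$ be the component of $\mathbb{C}\setminus\overline{U_n}$ containing $0$ (so $\intb U_n=\partial H_n$), and put $d_n=\mathrm{dist}(0,U_n)$ and $D_n=\sup\{|w|:w\in H_n\}$. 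Since any segment from $0$ to a point of $\overline{U_n}$ must cross $\partial H_n$, and $|w|$ attains its supremum over $\overline{H_n}$ on $\partial H_n$, one has $d_n=\min\{|w|:w\in\intb U_n\}$ and $D_n=\max\{|w|:w\in\intb U_n\}$. Finally $\overline U\subset A(f)$ by Lemma~\ref{mconnlemm}, and since $|f^n(z)|\ge d_n\ge M^n(R_1,f)$ for $z\in\overline U$ the set on the right of (\ref{RAdef}) is non-empty for every $z\in\intb U$; thus $R_A$ is a well-defined real number (at least $R_1$) on $\intb U$, for such $z$ every $a_n(z)$ lies in $[R_1,|z|]\subseteq[R_1,D_0]$, and $a_n(z)\downarrow R_A(z)$.

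The two structural inputs are: (i) $f^n(\intb U)\subseteq\intb U_n$ for all $n$; and (ii) $\log D_n/\log d_n\to 1$ as $n\to\infty$. For (i) it suffices, by induction, to show $f(\intb U_n)\subseteq\intb U_{n+1}$: since $\intb U_n\subset\partial U_n\subset J(f)$ and $f(\overline{U_n})\subset\overline{U_{n+1}}$ we have $f(\intb U_n)\subset J(f)\cap\overline{U_{n+1}}=\partial U_{n+1}$, and since points of $\intb U_n$ have modulus at most $D_n$ their $f$-images have modulus at most $M(D_n,f)$; a modulus estimate, using that a multiply connected $U_{n+1}$ with $\mathrm{dist}(0,U_{n+1})$ large contains a wide round annulus about $0$ that separates $H_{n+1}$ from every other complementary component of $\overline{U_{n+1}}$, at a scale exceeding $M(D_n,f)$, then rules out $f(\intb U_n)$ meeting any of those other pieces of $\partial U_{n+1}$. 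Input (ii), that the holes $H_n$ are asymptotically as thin as possible on the logarithmic scale, is part of the structure theory of multiply connected Fatou components and is essentially \cite[Lemma 4.2(c)]{Sixsmith3} (the finitely many small $n$ being absorbed into the choice of $R_4$). Granting (i) and (ii): for $z\in\intb U$ we have $d_n\le|f^n(z)|\le D_n$, so $a_n(z)\in[M^{-n}(d_n,f),M^{-n}(D_n,f)]$, an interval independent of $z$; writing $D_n=d_n^{1+\epsilon_n}$ with $\epsilon_n=\log D_n/\log d_n-1\to 0$, and using $M^{-n}(d_n,f)\ge R_1$ together with Corollary~\ref{CMtheo} (with exponent $1+\epsilon_n$ and $r=M^{-n}(d_n,f)$), we get $M^{-n}(D_n,f)\le M^{-n}(d_n,f)^{1+\epsilon_n}\le D_0^{\,1+\epsilon_n}$, so that
\begin{equation*}
M^{-n}(D_n,f)-M^{-n}(d_n,f)\le D_0\big(D_0^{\,\epsilon_n}-1\big)\longrightarrow 0.
\end{equation*}
Hence $|a_n(z_1)-a_n(z_2)|\to 0$ for all $z_1,z_2\in\intb U$, and letting $n\to\infty$ gives $R_A(z_1)=R_A(z_2)$; so $R_A\equiv R'$ on $\intb U$ for some constant $R'\ge 0$.

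The main obstacle I foresee is the boundary correspondence (i). Because $\partial H_n$ need not be a Jordan curve, the assertion ``$f$ sends the inner boundary of $U_n$ to the inner boundary of $U_{n+1}$'' is not soft: one needs Julia-set invariance to land in $\partial U_{n+1}$ and then the structure theory of multiply connected Fatou components to exclude, on modulus grounds, the remaining boundary pieces. It is exactly for this that one must know from the start---hence the enlarged $R_4$---that \emph{every} $U_n$ surrounds the origin and has already spread out, whereas Lemma~\ref{Lbaker} provides the surrounding property only for large $n$; securing input (ii), the logarithmic thinness of the holes, is the other place where the fine structure theory is needed.
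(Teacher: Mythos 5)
The paper does not prove Lemma~\ref{Lnhd}; it is stated as ``a combination of \cite[Lemma 4.2(c)]{Sixsmith3} and \cite[Lemma 4.4(a)]{Sixsmith3}'', so there is no in-paper proof to compare against. Your reconstruction has a sensible architecture, and its computational core is correct: writing $R_A(z)$ as the decreasing limit of $a_n(z)=M^{-n}(|f^n(z)|,f)$, trapping $|f^n(z)|\in[d_n,D_n]$ via an inner-boundary correspondence $f^n(\intb U)\subset\intb U_n$, and then showing via Corollary~\ref{CMtheo} that if $\log D_n/\log d_n\to 1$ then $M^{-n}(D_n,f)-M^{-n}(d_n,f)\to 0$, which forces $R_A$ to be constant on $\intb U$. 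That is a correct way to deduce the conclusion from the two structural inputs.

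As a stand-alone proof, however, it has gaps precisely at those inputs. You claim, attributing it to Lemma~\ref{Lbaker}, that $R_4=R_4(f)$ can be chosen so that for every qualifying $U$ one has $U_{n+1}$ surrounding $U_n$ for \emph{all} $n\geq 0$ and $\mathrm{dist}(0,U_n)\geq M^n(R_1,f)$; Lemma~\ref{Lbaker} gives neither the uniformity in $U$ nor any quantitative rate, so this is a separate assertion that itself needs the structure theory. The inner-boundary correspondence (i), which you rightly flag as the crux, is only sketched: Julia invariance gets you $f(\intb U_n)\subset\partial U_{n+1}$, but the ``modulus estimate'' you invoke to isolate $\intb U_{n+1}$ among the boundary pieces presupposes exactly the large-annulus picture of multiply connected components that you are already outsourcing to \cite[Lemma 4.2(c)]{Sixsmith3} for (ii). So the proposal is a plausible expansion of the paper's citation --- the collapsing-interval step is done carefully and correctly --- but (i) and the uniformity of $R_4$ remain genuine gaps rather than routine verifications, and they are not elementary.
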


We also need the following simple result. 
\begin{lemma}
\label{LAdashandRA}
Suppose that $f$ is a {\tef}, that $z \in A'(f)$ and that $R_A(z) \geq 0$. Then there exists $N\isnatural$ such that $$|f^n(z)| = M^n(R_A(z),f), \qfor n\geq N.$$
\end{lemma}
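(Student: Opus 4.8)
The plan is to pin down the value $R_A(z)$ exactly. Write $R' = R_A(z) \geq 0$ and abbreviate $r_n = |f^n(z)|$. Since $z \in A'(f)$, by~(\ref{Adashdef}) there is $N \isnatural$ (which we may take to be at least $1$) with $r_n = M(r_{n-1},f)$ for all $n \geq N$; iterating this relation gives $r_n = M^{n-N}(r_N,f)$ for $n \geq N$, so the tail of the orbit of $|z|$ is exactly the forward orbit of $r_N$ under $M(\cdot,f)$. The lemma will follow once we show $M^N(R',f) = r_N$, since then $M^n(R',f) = M^{n-N}\bigl(M^N(R',f),f\bigr) = M^{n-N}(r_N,f) = r_n$ for every $n \geq N$.

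To establish $M^N(R',f) = r_N$ I would use two standard facts: that $M(\cdot,f)$ is continuous and strictly increasing on $[0,\infty)$ with $M(r,f)\to\infty$, so that each iterate $M^N(\cdot,f)$ is a homeomorphism of $[0,\infty)$ onto $[M^N(0,f),\infty)$; and the elementary inequality $|f^{n+1}(z)| \leq M(|f^n(z)|,f)$, valid for all $n$. Because $R'$ lies in the set on the right of~(\ref{RAdef}) (this is where the hypothesis $R_A(z)\geq 0$ is used), we have $r_N \geq M^N(R',f) \geq M^N(0,f)$, so there is a unique $R^* \geq 0$ with $M^N(R^*,f) = r_N$. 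I would then verify that $R^*$ itself lies in the set on the right of~(\ref{RAdef}). Propagating $r_n = M(r_{n-1},f)$ upward from $n=N$ gives $M^n(R^*,f) = r_n$ for all $n \geq N$, so $M^n(R^*,f)\to\infty$; and for $n < N$ one first observes that $M^N(R^*,f) = r_N = M(r_{N-1},f)$ forces $M^{N-1}(R^*,f) = r_{N-1}$ by injectivity of $M(\cdot,f)$, and then a downward induction using $r_{n+1} \leq M(r_n,f)$ together with the same injectivity yields $M^n(R^*,f) \leq r_n$ for all $n < N$. Hence $M^n(R^*,f) \leq |f^n(z)|$ for every $n$ and $M^n(R^*,f)\to\infty$, so $R^*$ is admissible and therefore $R^* \leq R_A(z) = R'$.

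For the opposite inequality, every admissible $R$ satisfies $M^N(R,f) \leq r_N = M^N(R^*,f)$, hence $R \leq R^*$ by strict monotonicity of $M^N(\cdot,f)$; in particular $R' \leq R^*$. Thus $R' = R^*$, so $M^N(R_A(z),f) = r_N = |f^N(z)|$, and the result holds with this $N$.

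All of this is routine except for one point that needs care: the downward part of the admissibility check for $R^*$. At the early iterates $n < N$ the orbit of $z$ need not be maximal, so one cannot simply read off $M^n(R^*,f) = r_n$ there; it is precisely the bound $|f(w)| \leq M(|w|,f)$, combined with the strict monotonicity of $M(\cdot,f)$ (which allows a single application of $M$ to be undone), that keeps $R^*$ admissible all the way down to $n = 0$.
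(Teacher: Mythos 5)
Your proof is correct and follows essentially the same route as the paper: both define the candidate value by inverting the iterated maximum-modulus function at the index where the orbit becomes maximal, then identify it with $R_A(z)$ via continuity and strict monotonicity of $M(\cdot,f)$. The downward induction you carry out to verify that your $R^*$ actually lies in the admissible set of~(\ref{RAdef}) is precisely the step the paper compresses into its final sentence, so your write-up makes explicit what the paper leaves to the reader.
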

\begin{proof}
Suppose that $z \in A'(f)$. It follows from (\ref{Adashdef}) that there exists $N\isnatural$ such that, with $R = |f^{N-1}(z)|$, we have  $M^n(R, f)\rightarrow\infty$ as $n\rightarrow\infty$, and $$|f^n(z)| = M^{n+1-N}(R, f), \qfor n\geq N.$$ Since $R_A(z) \geq 0$ we have, by (\ref{RAdef}), that $$M^{N-1}(0,f) \leq M^{N-1}(R_A(z),f) \leq |f^{N-1}(z)| = R.$$ It follows, by the continuity of the function $M$, that there exists $R' \geq R_A(z)$ such that  $M^{N-1}(R',f) = R$. We deduce that $$|f^n(z)| = M^{n}(R', f), \qfor n\geq N.$$ The result follows since, by (\ref{RAdef}), we must have $R_A(z) = R'$.
\end{proof}
We now prove Theorem~\ref{Tmain}. 
\begin{proof}[Proof of Theorem~\ref{Tmain}]
Let $f$ be a {\tef}. Our proof splits into two cases. If $f$ has a multiply connected Fatou component, then we show that there are, at most, countably many points in $A'(f) \cap J(f)$ on the inner boundary of some multiply connected Fatou component, which is a continuum in $A(f) \cap J(f)$.  If $f$ has no multiply connected Fatou component, then we use Theorem~\ref{Tcovering} to construct a point $\zeta$ which lies in $A''(f) \cap J(f)$. \\

Suppose first that $f$ has a multiply connected Fatou component, $U$. By Lemma~\ref{Lbaker} we may assume that $U$ surrounds the origin and that \upshape\rmfamily~dist\itshape$(0,~U) \geq R_4$\normalfont, where $R_4$ is the constant from Lemma~\ref{Lnhd}. It follows from Lemma~\ref{Lnhd} that there exists $R'\geq 0$ such that $R_A(z) = R'$, for $z \in \intb U.$ We deduce by Lemma~\ref{LAdashandRA} that for each $z \in \intb U \cap A'(f)$ there exists $N=N(z)\isnatural$ such that $|f^n(z)| = M^n(R',f)$, for $n\geq N.$

We recall the definition of the set $\mathcal{M}(f)$ in (\ref{Mdef}). It follows, by (\ref{Aeq}), that 
\begin{equation}
\label{imposseq}
\intb U \cap A'(f) \subset \bigcup_{k=0}^\infty f^{-k}\left(\{ z\in\mathcal{M}(f) : |z| = M^k(R',f)\}\right).
\end{equation}

It is known \cite[Theorem 10]{valironlectures} that, for each $R \geq 0$, the set $\{ z\in\mathcal{M}(f) : |z| = R \}$ is finite. Hence the right-hand side of (\ref{imposseq}) is countable. Since, by Lemma~\ref{mconnlemm}, $\intb U$ is a continuum contained in $A(f)\cap J(f)$, we deduce that $\intb U \cap A''(f) \cap J(f)$ is uncountable. \\

On the other hand, suppose that $f$ has no multiply connected Fatou component. Set $K = \frac{1}{2048}$ and let $R_3$ be the constant from Corollary~\ref{Ccovering}. (We note that this choice of $K$ is smaller than necessary for the proof of Theorem~\ref{Tmain}, but facilitates the proof of Lemma~\ref{L2}; see equations (\ref{quitesmall}) and (\ref{neededeq}) below.) Define a function $$\mu(r) = K M(r, f), \qfor r > 0.$$ 

We construct an increasing sequence of real numbers $(r_n)_{n\isnatural}$ and a sequence of integers $(m_n)_{n\isnatural}$. Choose $r_1 \geq R_3$ sufficiently large that $B(0, r_1) \cap J(f) \ne \emptyset$ and also, by (\ref{Meq1}), that 
\begin{equation}
\label{mugoesoff}
\mu(r) > \max\{r^2,  2\}, \qfor r \geq r_1.
\end{equation}
By (\ref{Meq2}), we may also assume that $r_1$ is sufficiently large that 
\begin{equation*}
\mu(r) \geq \frac{1}{K} M\left(Kr,f\right), \qfor r \geq r_1.
\end{equation*}
We deduce that
\begin{equation}
\label{mubig}
\mu^n(r) \geq M^n\left(Kr,f\right), \qfor r \geq r_1, \ n\isnatural.
\end{equation}

The construction proceeds inductively. Suppose that, for some $k\geq 1$, we have constructed the sequences $(r_n)_{n \leq k}$ and $(m_n)_{n < k}$. Let $A$ be the annulus $A = A(r_k, 8r_k)$ and let $A'$ be the annulus $A' = A(2r_k, 4r_k)$. Suppose that $$m(s, f^n) > 1, \qfor s \in (2r_k,4r_k), \ n\isnatural.$$ Then, by Montel's theorem, $\{f^n\}_{n\isnatural}$ is a normal family in $A'$, and so $A' \subset F(f)$. Hence, by the choice of $r_1$, $A'$ is contained in a multiply connected Fatou component of $f$, which a contradiction. Therefore, there exists $m_k \isnatural$ and $s \in (2r_k,4r_k)$ such that $m(s, f^{m_k}) \leq 1$.

Set $S=K M^{m_k}(r_k,f), \ S'=8S, \ T=16S$ and $T'=128S$. It follows from Corollary~\ref{Ccovering} that $f^{m_k}(A(r_k, 8r_k))$ contains either $\overline{A}(S, S')$, in which case we set $r_{k+1}~=~S$, or $\overline{A}(T, T')$, in which case we set $r_{k+1}=T$. Note that in either case we have, by (\ref{mugoesoff}), that $$r_{k+1} \geq K M^{m_k}(r_k,f) \geq \mu^{m_k}(r_k) > r_k.$$ This completes the construction of the sequences.

We now define a sequence of closed annuli $(E_n)_{n\isnatural}$ by $$E_n = \overline{A}(r_n, 8r_n), \qfor n\isnatural.$$ It follows at once from the above construction that $E_{n+1} \subset f^{m_n}(E_n)$, for $n\isnatural$. We also have, by the choice of $r_1$ and since $f$ has no multiply connected Fatou component,  that $E_{n}\cap J(f) \ne \emptyset$, for $n\isnatural$. Hence, by Lemma~\ref{LemmaRS}, there exists $\zeta\in J(f)$ such that (\ref{feq}) holds with $p_n = \sum_{k=1}^n m_k$, for $n\isnatural$.

We claim next that $\zeta \in A(f)$. We note, by (\ref{feq}) and by construction, that
\begin{align*}
|f^{p_n}(\zeta)| &\geq r_{n+1} \\
								 &\geq KM^{m_n}(r_n,f) \\
                 &\geq KM^{m_n}(KM^{m_{n-1}}(r_{n-1}, f),f) \\
                 &. \\
                 &. \\
                 &. \\
                 &\geq \mu^{p_n}(r_1), \qfor n\isnatural.
\end{align*}
By (\ref{mugoesoff}), there exists $\ell\isnatural$ such that $\mu^\ell(r_1) \geq r_1/K$. We deduce by (\ref{mubig}) that
\begin{equation}
\mu^{n+\ell}(r_1) \geq \mu^{n}\left(\frac{r_1}{K}\right) \geq M^n(r_1,f), \qfor n\isnatural.
\end{equation}

Hence $$|f^{p_n}(\zeta)| \geq \mu^{p_n}(r_1) \geq M^{p_n-\ell}(r_1,f), \qfor \text{sufficiently large values of } n.$$ It follows that $|f^k(\zeta)| \geq M^{k-\ell}(r_1,f)$, for sufficiently large values of $k$, and so  $\zeta \in A(f)$ as claimed.

Finally, suppose that $\zeta\in A'(f)$, in which case, by (\ref{Adashdef}), there exists $N\isnatural$ such that
\begin{equation}
\label{zetaneweq}
|f^{n+p}(\zeta)| = M^n(|f^p(\zeta)|,f), \qfor n\isnatural \text{ and } p\geq N.
\end{equation}

However, by the choices of $S$, $T$ and $K$, we have that $r_{n+1} \leq \frac{1}{128} M^{m_n}(r_n,f)$, for $n\isnatural$, and so if $z \in E_n$ and $f^{m_n}(z) \in E_{n+1}$, for some $n\isnatural$, then 
\begin{equation}
\label{zeq}
|f^{m_n}(z)| \leq 8r_{n+1} \leq \frac{1}{16} M^{m_n}(r_n,f) \leq \frac{1}{16} M^{m_n}(|z|,f).
\end{equation}
Hence, by (\ref{feq}) and (\ref{zeq}) we have, for all sufficiently large $n\isnatural$,
\begin{equation}
\label{quitesmall}
|f^{m_n+p_{n-1}}(\zeta)| = |f^{m_n}(f^{p_{n-1}}(\zeta))|
                         \leq \frac{1}{16} M^{m_n}(|f^{p_{n-1}}(\zeta)|,f).
\end{equation}

This is in contradiction to (\ref{zetaneweq}). We deduce that $\zeta \in A''(f) \cap J(f)$, as required.
\end{proof}
\section{Proof of Theorem~\ref{Tprops}}
\label{S3}
We require the following well-known distortion lemma; see, for example, \cite[Lemma 7]{MR1216719}.
\begin{lemma}
\label{dlemm}
Suppose that $f$ is a {\tef} and that $U \subset I(f)$ is a simply connected Fatou component of $f$. Suppose that $K$ is a compact subset of $U$. Then there exist $C > 1$ and $N\isnatural$
such that 
\begin{equation*}
\frac{1}{C}|f^n(z)| \leq|f^n(w)| \leq C|f^n(z)|, \qfor w, z \in K, n \geq N.
\end{equation*}
\end{lemma}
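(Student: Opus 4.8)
The plan is to deduce this from two classical facts: the contraction of the hyperbolic metric under holomorphic maps, and the Koebe $\tfrac14$-theorem in the form of a lower bound for the hyperbolic density near the boundary of a simply connected domain. As above, for $n \geq 0$ let $U_n$ be the Fatou component containing $f^n(U)$, so that $f^n(U) \subset U_n$ and $f^n\colon U \to U_n$. Since $J(f)$ is infinite, both $U$ and every $U_n$ are hyperbolic. Since $U \subset I(f)$, the family $\{f^n\}_{n\isnatural}$ is normal on $U$ and no subsequential limit can be a finite holomorphic function, so $f^n \to \infty$ locally uniformly on $U$; fixing $p \in J(f)$ with $p \neq 0$, we may therefore choose $N\isnatural$ with $|f^n(w)| > 2|p|$ for all $w \in K$ and $n \geq N$.

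First I would record that $K$ has finite hyperbolic diameter in $U$: the map $(w,z) \mapsto [w,z]_U$ is finite-valued (as $U$ is connected) and continuous on $U \times U$, so $d := \sup_{w,z \in K} [w,z]_U < \infty$ by compactness of $K$. Applying the contraction of the hyperbolic metric to $f^n\colon U \to U_n$ then gives
\begin{equation*}
[f^n(w),f^n(z)]_{U_n} \leq [w,z]_U \leq d, \qfor w,z \in K,\ n\isnatural.
\end{equation*}

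The main step converts this into multiplicative control of the modulus. Here I use the standard fact that, since $U$ is simply connected, so is each $U_n$ (equivalently, a preimage component of a multiply connected Fatou component is itself multiply connected). As $U_n$ is then a simply connected proper subdomain of $\mathbb{C}$ and $p \notin U_n$, the Koebe $\tfrac14$-theorem gives an absolute constant $\kappa > 0$ with
\begin{equation*}
\rho_{U_n}(\zeta) \geq \frac{\kappa}{\mathrm{dist}(\zeta,\partial U_n)} \geq \frac{\kappa}{|\zeta - p|}, \qfor \zeta \in U_n.
\end{equation*}
Integrating this bound along any path in $U_n$ from $a := f^n(w)$ to $b := f^n(z)$, and using that $\int_\gamma |\mathrm{d}\zeta|/|\zeta - p| \geq \big|\log|a-p| - \log|b-p|\big|$ for every such path, we obtain $\big|\log|a-p| - \log|b-p|\big| \leq d/\kappa$, that is, $|a-p|/|b-p| \leq e^{d/\kappa}$, for all $w,z \in K$ and all $n$. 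Finally, for $n \geq N$ the inequality $|f^n(w)| > 2|p|$ yields $\tfrac12|f^n(w)| \leq |f^n(w)-p| \leq 2|f^n(w)|$ on $K$, so that $|f^n(w)|/|f^n(z)| \leq 4e^{d/\kappa} =: C$ for $w,z \in K$ and $n \geq N$; interchanging $w$ and $z$ gives the two-sided estimate, which proves the lemma.

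The argument is essentially routine and I do not anticipate a serious obstacle; the two points needing attention are the simple connectivity of $U_n$ --- which is precisely what makes the clean Koebe bound $\rho_{U_n}(\zeta) \geq \kappa/\mathrm{dist}(\zeta,\partial U_n)$ available, since for a general domain omitting several points one obtains only a logarithmically damped bound, good enough merely for an estimate of the shape $|f^n(w)| \leq C|f^n(z)|^C$ --- and the fact that the estimate is intrinsically about the distance to the omitted value $p$, so that the hypothesis $U \subset I(f)$ (forcing $f^n(K)$ to be large compared with $|p|$) must be invoked, which is why the conclusion is stated only for $n \geq N$.
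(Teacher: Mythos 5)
Your proof is correct, and it follows what is essentially the standard argument behind this lemma. The paper itself does not prove the statement but cites it as well known, referring to Bergweiler's survey (\cite[Lemma~7]{MR1216719}); the proof there (and in the other places where this distortion result appears in the escaping-set literature) is precisely the hyperbolic-metric argument you reconstructed: contraction of the hyperbolic metric under $f^n\colon U\to U_n$, the fact that each $U_n$ remains simply connected (which the paper itself invokes elsewhere, citing \cite[Lemma~4.2]{MR3095155}), the Koebe-type lower bound $\rho_{U_n}(\zeta)\geq \kappa/\operatorname{dist}(\zeta,\partial U_n)$ available only for simply connected domains, and finally the observation that since $U\subset I(f)$ the images $f^n(K)$ eventually lie far from the fixed reference point $p\in J(f)$ so that $|f^n(\cdot)-p|$ and $|f^n(\cdot)|$ become comparable. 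Your closing remarks correctly identify the two points that actually carry the argument, namely the simple connectivity of $U_n$ (without which Koebe degrades to a logarithmic bound and the conclusion becomes only a power-law comparison) and the escaping hypothesis (which is why the estimate is stated only for $n\geq N$).
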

We also require the following \cite[Lemma 10]{rippon-2008}. Here a set $S$ is \itshape backwards invariant \normalfont if $z \in S$ implies that $f^{-1}(\{z\}) \subset S$.
\begin{lemma}
\label{Jlemm}
Suppose that $f$ is a {\tef}, and that $E~\subset~\mathbb{C}$ is non-empty.
\begin{enumerate}[(a)]
\item If $E$ is backwards invariant, contains at least three points and $\operatorname{int} E\cap J(f) = \emptyset$, then $J(f) \subset \partial E$.
\item If every component of $F(f)$ that meets $E$ is contained in $E$, then $\partial E \subset J(f)$.
\end{enumerate}
\end{lemma}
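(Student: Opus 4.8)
The plan is to obtain both parts from Montel's theorem together with the elementary identity $\overline{E} = \operatorname{int} E \cup \partial E$, valid for every subset $E$ of $\mathbb{C}$. Neither part needs deep machinery: the hypothesis in part~(a) that $E$ has at least three points is exactly what is needed to dodge the single value that Montel's theorem permits iterated images to omit, while in part~(b) the hypothesis feeds directly into a short topological argument.

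For part~(a), I would fix three distinct points $z_1,z_2,z_3 \in E$ and take an arbitrary $w \in J(f)$. Given $\delta>0$, the family $\{f^n|_{B(w,\delta)}\}_{n\isnatural}$ is not normal, so by Montel's theorem $\bigcup_{n\isnatural} f^n(B(w,\delta))$ omits at most one point of $\mathbb{C}$, and in particular contains some $z_j$. Hence there are $n\isnatural$ and $\zeta \in B(w,\delta)$ with $f^n(\zeta) = z_j$; since $E$ is backwards invariant, $f^{-n}(\{z_j\}) \subset E$, so $\zeta \in E$. As $\delta>0$ was arbitrary, $w \in \overline{E}$, and as $w \in J(f)$ was arbitrary, $J(f) \subset \overline{E} = \operatorname{int} E \cup \partial E$. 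Since $\operatorname{int} E \cap J(f) = \emptyset$ by hypothesis, this forces $J(f) \subset \partial E$.

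For part~(b), I would argue by contradiction: suppose some $w \in \partial E$ lies in $F(f)$, and let $V$ be the Fatou component containing $w$. Then $V$ is open, contains $w$, and $w \in \partial E \subset \overline{E}$, so $V$ meets $E$; hence $V \subset E$ by hypothesis. But $V$ is open, so $V \subset \operatorname{int} E$, whence $w \in \operatorname{int} E$, contradicting $w \in \partial E$. Therefore $\partial E \subset J(f)$.

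The only point needing care is to invoke Montel's theorem on a connected set — the disc $B(w,\delta)$ serves — so that a family of holomorphic maps omitting two fixed values is normal; since a point of $J(f)$ has, by definition, no neighbourhood on which $\{f^n\}$ is normal, the union $\bigcup_{n\isnatural} f^n(B(w,\delta))$ must indeed cover all but at most one point of the plane, which is the crux of part~(a). Beyond this there is no real obstacle: one never needs a single point of $E$ whose backward orbit is dense in $J(f)$, since using the three given points simultaneously both suffices and sidesteps any discussion of exceptional values.
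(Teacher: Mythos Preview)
Your proof is correct. The paper does not supply its own argument for this lemma; it simply quotes it as \cite[Lemma~10]{rippon-2008}. Your Montel-based proof of part~(a) and the short topological contradiction for part~(b) are exactly the standard arguments one would expect behind that citation, so there is nothing to compare.
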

We now prove Theorem~\ref{Tprops}, and so we suppose that $f$ is a {\tef}. For part (a) of the theorem, let $U$ be a simply connected Fatou component of $f$ which meets $A(f)$. Note \cite[Theorem~1.2]{Rippon01102012} that $U \subset A(f)$. Suppose, by way of contradiction, that $z_0 \in U \cap A'(f)$. Then there exists $N\isnatural$, a point $z_N = f^N(z_0)$ and $R = |z_N|$ such that $$|f^n(z_N)| = M^n(R,f), \text{ for } n\isnatural.$$ 
Let $U_N$ be the Fatou component of $f$ containing $z_N$, and note (see, for example, \cite[Lemma 4.2]{MR3095155}) that $U_N$ is also simply connected. Choose a point $z_N' \in U_N$ such that $|z_N'| = R' < R$. Clearly $$|f^n(z_N')| \leq M^n(R',f), \text{ for } n\isnatural.$$ Since there exists $N'\isnatural$ such that $f^n(z_N') \ne 0$, for $n\geq N'$, we deduce that
\begin{equation}
\label{E1}
\frac{|f^n(z_N)|}{|f^n(z_N')|} \geq \frac{ M^n(R,f)}{ M^n(R',f)}, \qfor n\geq N'.
\end{equation}
Let $K=\{z_N,z_N'\}$. We deduce by Lemma~\ref{dlemm} that the left-hand side of (\ref{E1}) is bounded for $n\geq N'$. However, it follows from Corollary~\ref{CMtheo} that the right-hand side of (\ref{E1}) tends to infinity as $n\rightarrow\infty$. This contradiction completes the proof of part (a) of the theorem.

Suppose next that $U$ is a multiply connected Fatou component of $f$. Part (b) of the theorem is an immediate consequence of Lemma~\ref{mconnlemm} and Theorem~\ref{Ttiny}. 

For part (c) of the theorem, we note that if follows from Theorem~\ref{Tmain} that $A''(f) \cap J(f)$ is an infinite set, since for each $z \in A''(f)\cap J(f)$ at least one of the points $z, f(z)$ or $f^2(z)$ must have infinitely many preimages. It is known \cite[Theorem 4]{MR1216719} that the set of repelling periodic points of $f$ is dense in $J(f)$. Clearly $A''(f)$ contains no periodic points, and so $\operatorname{int} A''(f) \subset F(f)$. We deduce by Lemma~\ref{Jlemm}(a), applied with $E=A''(f) \cap J(f)$ and then with $E=A''(f)$, that $A''(f)$ is dense in $J(f)$, and that $J(f) \subset \partial A''(f)$.

For part (d) of the theorem, suppose that $A'(f) \cap J(f) \ne \emptyset$. For the same reasons as above, it follows that $A'(f) \cap J(f)$ is an infinite set and $\operatorname{int} A'(f) \subset F(f)$. We deduce by Lemma~\ref{Jlemm}(a), applied with $E=A'(f) \cap J(f)$ and then with $E=A'(f)$, that $A'(f)$ is dense in $J(f)$, and that $J(f) \subset \partial A'(f)$.

For part (e) of the theorem, suppose that $A'(f)\cap F(f) = \emptyset$. We deduce from part (a), Lemma~\ref{mconnlemm} and Lemma~\ref{Jlemm}(b) that $\partial A''(f) \subset J(f)$, and hence by part (c) that $J(f) = \partial A''(f)$.

Finally, for part (f) of the theorem, suppose that $A'(f) \cap J(f) \ne \emptyset$ and that $A'(f)\cap F(f) = \emptyset$. We deduce from Lemma~\ref{Jlemm}(b) that $\partial A'(f) \subset J(f)$, and hence by part (d) that $J(f) = \partial A'(f)$.
\section{Examples}
\label{Sfinal}
\begin{example}
\label{Example1}
\normalfont
Let $f_1(z) = \exp(z)$. We observe that $\mathcal{M}(f_1) = [0, \infty)$ and that $f_1(\mathcal{M}(f_1)) \subset \mathcal{M}(f_1)$. We deduce that $A'(f_1)$ is the countable union of analytic curves $$A'(f) = \bigcup_{k=0}^\infty f_1^{-k}([0, \infty)).$$ 

Since \cite[Lemma 4]{MR1216719} the set $\bigcup_{k=0}^\infty f_1^{-k}(-1)$ is dense in $J(f_1)$ and, as is well-known, $I(f_1)\subset J(f_1)$, we deduce that $\bigcup_{k=0}^\infty f_1^{-k}(-1)$ is dense in $A'(f_1)$. Thus $$\bigcup_{k=0}^\infty f_1^{-k}((-\infty,0))\subset A'(f_1) \subset \overline{\bigcup_{k=0}^\infty f_1^{-k}((-\infty,0))}.$$  Rempe \cite[Proposition 3.1]{MR2599894} showed that the set $\bigcup_{k=0}^\infty f_1^{-k}((-\infty,0))$ is connected. We deduce that $A'(f_1)$ is connected.
\end{example}
\begin{example}
\label{Example1b}
\normalfont
Let $f_2(z) = i\exp(z)$. We observe that $\mathcal{M}(f_2) = [0, \infty)$ and that $f_2(\mathcal{M}(f_2)) \cap \mathcal{M}(f_2) = \emptyset$. It follows that $A'(f_2) = \emptyset$.
\end{example}
\begin{example}
\label{Emconn}
\normalfont
Baker \cite{MR0153842} showed that constants $C>0$ and $0 < a_1 < a_2< \cdots$ can be chosen in such a way that $$f_3(z) = C z^2 \prod_{n=1}^\infty \left(1 + \frac{z}{a_n}\right)$$ is a {\tef} with a multiply connected Fatou component. Since $f_3$ only has positive coefficients in its power series, it is clear that $A'(f_3)$ contains an unbounded interval in the positive real axis. We deduce by Lemma~\ref{Lbaker} that $A'(f_3) \cap F(f_3) \ne \emptyset$. It follows, by Theorem~\ref{Tprops} part (b), that $\partial A''(f_3)$ and $\partial A'(f_3)$ both contain points of $F(f_3)$.
\end{example}
\begin{example}
\label{Emconna}
\normalfont
Let $f_4(z) = i f_3(z)$, where $f_3$ is the function in Example~\ref{Emconn}. It can be seen from the construction in \cite{MR0153842} that $f_4$ also has a multiply connected Fatou component. However, for the same reason as the function $f_2$ in Example~\ref{Example1b}, we see that $A'(f_4) = \emptyset$.
\end{example}
\begin{example}
\label{HExample}\normalfont
We define a family of {\tef}s by $$g_\alpha(z) =  \alpha\exp(e^{z^2} + \sin z), \qfor \alpha>0.$$ The function $g_1$ was considered by Hardy \cite{hardy1909}. It is clear that Theorem~\ref{Texample} follows from the following lemmas.
\begin{lemma}
\label{L1}
If $\alpha>0$, then $A'(g_\alpha)$ is uncountable and totally disconnected.
\end{lemma}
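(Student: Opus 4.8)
The plan is to describe $\mathcal{M}(g_\alpha)$ explicitly near infinity, reduce $A'(g_\alpha)$ to the real dynamics of $g_\alpha$ on a half-line, and then read off both conclusions from the fact that, outside a disc, $\mathcal{M}(g_\alpha)$ is a disjoint union of short real intervals separated by definite gaps.

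\emph{Step 1: the maximum modulus.} Write $h(z)=e^{z^2}+\sin z$, so $|g_\alpha(z)|=\alpha\exp(\operatorname{Re}h(z))$, and the task on $\{|z|=r\}$ is to maximise $\operatorname{Re}h$. For $z=re^{i\theta}$ one has $\operatorname{Re}(e^{z^2})=e^{r^2\cos2\theta}\cos(r^2\sin2\theta)\le e^{r^2}$, with equality only at $\theta=0,\pi$, while $|\operatorname{Re}(\sin z)|\le\cosh r$ is negligible by comparison. A careful estimate near $\theta=0$ and $\theta=\pi$ (using that $\theta=0$ is a critical point of $\operatorname{Re}h|_{\{|z|=r\}}$ with hugely negative second derivative, so the dominant term drops like $e^{r^2}r^2\theta^2$, whereas the $\sin z$ perturbation moves the value by only $O(r^2\theta^2)$ there, since the first $\theta$-derivative of $\operatorname{Re}(\sin(re^{i\theta}))$ vanishes at $\theta=0$) then yields an $R_0>0$ such that, for $r\ge R_0$,
\[
M(r,g_\alpha)=\alpha\exp\bigl(e^{r^2}+|\sin r|\bigr),\qquad \mathcal{M}(g_\alpha)\cap\{|z|\ge R_0\}=\{x\in\mathbb{R}:|x|\ge R_0,\ \sin x\ge0\}.
\]
Thus, outside a disc, $\mathcal{M}(g_\alpha)$ is a disjoint union $\mathcal{M}_\infty$ of closed real intervals of length $\pi$, any two of which are a distance at least $\pi$ apart. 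I will also use that on $(R_0,\infty)$ the real map $g_\alpha$ is an increasing homeomorphism onto an interval with $g_\alpha(t)\gg t$ and $g_\alpha'(t)\ge2$, so $(g_\alpha^n)'\ge2^n$ on any orbit staying in $(R_0,\infty)$; and that $g_\alpha'$ is so large on each component $I$ of $\mathcal{M}_\infty$ that $g_\alpha(I)$ contains many components of $\mathcal{M}_\infty$.

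\emph{Step 2: reduction.} Let $\Lambda=\{w\in\mathbb{C}:\ g_\alpha^{m}(w)\in\mathcal{M}(g_\alpha)\text{ and }|g_\alpha^{m}(w)|\ge R_0\text{ for all }m\ge0\}$, a closed set which by Step 1 lies in $\mathcal{M}_\infty\subset\mathbb{R}$. Every $w\in\Lambda$ has a real, eventually increasing, maximally fast escaping orbit, so $\Lambda\subset A'(g_\alpha)$; conversely, if $z\in A'(g_\alpha)$ then $z\in I(g_\alpha)$ and its orbit is eventually in $\mathcal{M}(g_\alpha)$, hence eventually in $\mathcal{M}_\infty$, so $g_\alpha^{N}(z)\in\Lambda$ for large $N$. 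Using complete invariance of $A'(g_\alpha)$ this gives $A'(g_\alpha)=\bigcup_{k\ge0}g_\alpha^{-k}(\Lambda)$. For uncountability I would fix a component $I_{k_0}$ of $\mathcal{M}_\infty$ on the positive axis and then choose components $I_{k_{m+1}}\subset g_\alpha(I_{k_m})$ — at each stage at least two choices are available — obtaining nested closed intervals $P_m\subset I_{k_0}$ with $g_\alpha^{m}(P_m)=I_{k_m}$ and $\operatorname{diam}P_m\le\pi\,2^{-m}$; then $\bigcap_mP_m$ is a single point of $\Lambda$, and distinct sequences $(k_m)$ give distinct points, so $\Lambda$ (hence $A'(g_\alpha)$) is uncountable. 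For total disconnectedness, note first that $\Lambda$ contains no interval: if $(a,b)\subset\Lambda$ then $g_\alpha^{n}((a,b))$ is a connected subset of $\mathcal{M}_\infty$, hence lies in one component and has length $\le\pi$, contradicting $(g_\alpha^n)'\ge2^n$. So $\Lambda$ is a closed, totally disconnected — hence zero-dimensional — subset of $\mathbb{C}$. Since $g_\alpha$ is a nonconstant entire map, each $g_\alpha^{-k}(\Lambda)$ is closed and is totally disconnected (a connected subset would map to a point of $\Lambda$ and so lie in a discrete fibre), hence zero-dimensional. The countable closed sum theorem for zero-dimensional separable metric spaces now gives that $A'(g_\alpha)=\bigcup_{k}g_\alpha^{-k}(\Lambda)$ is zero-dimensional, in particular totally disconnected.

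I expect Step 1 to be the main obstacle: proving that for large $r$ the maximum of $|g_\alpha|$ on $\{|z|=r\}$ is attained exactly at $r$ or $-r$, with value $\alpha\exp(e^{r^2}+|\sin r|)$. The delicacy is that on a tiny arc about $\theta=0$ the leading term $\operatorname{Re}(e^{z^2})$ is within $O(e^{r^2}/r^2)$ of $e^{r^2}$, so crude bounds do not separate it from the possibly comparable quantity $|\sin r|$; one must exploit the vanishing of the first $\theta$-derivative of $\operatorname{Re}(\sin(re^{i\theta}))$ at $\theta=0$ to confirm that $\theta=0$ is the genuine maximiser (and likewise $\theta=\pi$). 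It is precisely here that $g_1$ being Hardy's function matters: the maximum curve oscillates between neighbourhoods of the two half-lines, creating the gaps in $\mathcal{M}(g_\alpha)$ that force $\Lambda$, and hence $A'(g_\alpha)$, to be totally disconnected. Once Step 1 is in place, everything downstream is essentially formal.
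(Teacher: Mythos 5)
Your proposal is correct and follows essentially the same strategy as the paper: describe $\mathcal{M}(g_\alpha)$ far from the origin as an interleaving of real intervals, reduce $A'(g_\alpha)$ to preimages of a set on the real line ($\Lambda$, which agrees with the paper's $S$ up to one iterate), obtain uncountability by a binary choice at each stage (your nested-interval argument replaces the paper's appeal to Lemma~\ref{LemmaRS}, to the same effect), and conclude total disconnectedness from expansion on the real axis together with a countable sum theorem. The noteworthy departures are that the paper simply cites Hardy (1909) for the structure of $\mathcal{M}(g_\alpha)$ where you sketch a reproof, correctly flagging the delicate competition between $e^{r^2}\cos(r^2\sin 2\theta)$ and $\sin r\cosh y$ near $\theta=0,\pi$; and that your direct argument that $\Lambda$ contains no interval (any interval would be mapped by $g_\alpha^n$ into a single length-$\pi$ component of $\mathcal{M}_\infty$, contradicting $(g_\alpha^n)'\ge 2^n$) is a little cleaner than the paper's, which instead constructs uncountably many points in each gap $J_k=[(2k+5/4)\pi,(2k+7/4)\pi]$ and then uses expansion to show any two points of $S$ eventually straddle such a gap --- the paper needs this stronger fact later in the proof of Lemma~\ref{L2}, which your version does not supply. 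One small imprecision: the step ``$g_\alpha^{-k}(\Lambda)$ is closed and totally disconnected, hence zero-dimensional'' is true for subsets of $\mathbb{C}$ only because closed sets are $\sigma$-compact (compact totally disconnected sets are zero-dimensional, and then the closed sum theorem applies), so it quietly invokes the same sum theorem you then use explicitly; the paper instead applies its cited lemma on countable unions of compact totally disconnected sets, after first exhausting each $g_\alpha^{-k}(S)$ by compacta.
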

\begin{lemma}
\label{L2}
If $\alpha>0$ is sufficiently small, then there are uncountably many singleton components of $A''(g_\alpha)$. Moreover, $A''(g_\alpha)$ has at least one unbounded component.
\end{lemma}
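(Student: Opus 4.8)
The plan is to work directly with the dynamics of $g=g_\alpha$; the real work is a sufficiently precise description of where $g$ is large, where it attains its maximum modulus, and how it maps the relevant tracts. First I would establish, for all large $r$, the size of $M(r,g)$ and the location of $\mathcal{M}(g)\cap\{|z|=r\}$ (notation as in (\ref{Mdef})), together with a countable family of pairwise separated unbounded tracts on which $|g|$ is large. From the explicit form of $g$ one then reads off which tracts meet $\mathcal{M}(g)$ at arbitrarily large moduli --- the ``maximal'' tracts --- and which do not, and, crucially, how $g$ maps a point lying deep inside a tract: the argument of the image should vary so rapidly that no maximal tract is mapped into $\mathcal{M}(g)$, so that an orbit staying in maximal tracts for ever meets $\mathcal{M}(g)$ only along a Cantor-type set. (This is the mechanism behind Lemma~\ref{L1}, which I may take as known.) I would also record that, for small $\alpha$, $g$ has an attracting fixed point close to the origin, and hence a non-escaping open set, since this supplies the ``other side'' needed in the isolation argument below.

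The uncountably many singleton components of $A''(g)$ I would produce by a symbolic-dynamics construction modelled on the proof of Theorem~\ref{Tmain}. Using nested closed annuli and Corollary~\ref{Ccovering} --- the constant $K=\tfrac{1}{2048}$ in that proof was chosen precisely to make this work, compare (\ref{quitesmall}) --- one shows, via Lemma~\ref{LemmaRS}, that each member of an uncountable family of admissible itineraries is realised by an orbit running through the prescribed annuli. The lower bounds furnished by Corollary~\ref{Ccovering} give $|g^{p_n}(\zeta)|\ge\mu^{p_n}(r_1)$, so $\zeta\in A(g)$, while arranging $|g^{m_n+p_{n-1}}(\zeta)|\le\tfrac{1}{16}M^{m_n}(|g^{p_{n-1}}(\zeta)|,g)$ for infinitely many $n$ shows, by (\ref{Adashdef}), that $\zeta\notin A'(g)$; hence $\zeta\in A''(g)$, and distinct itineraries give distinct points, so $A''(g)$ is uncountable. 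The substantive extra point is that, by choosing the itineraries carefully and using the expansion of $g$ along the annuli together with the geometry above, a punctured neighbourhood of each such $\zeta$ meets only $A'(g)\cup(\mathbb{C}\setminus A(g))$: every nearby point either fails to escape fast or is eventually carried into $\mathcal{M}(g)$. Thus $\{\zeta\}$ is a component of $A''(g)$.

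For the unbounded component I would exhibit an explicit unbounded connected set $\Gamma\subset A''(g)$ --- the natural candidate being the large-modulus part of a tract that is fast escaping but never maximal (for instance one along which $g$ is forward invariant and $|g^n|$ stays comparable to, but bounded strictly below, $M^n$), or a backward orbit of such a set. One checks $\Gamma\subset A(g)$ from the fast lower bound obtained above and $\Gamma\cap A'(g)=\emptyset$ because the forward orbit of $\Gamma$ avoids $\mathcal{M}(g)$; then the component of $A''(g)$ containing $\Gamma$ is unbounded. (Should $g_\alpha$ have a multiply connected Fatou component $U$, one could instead extract an unbounded component of $A''(g)$ from Lemma~\ref{mconnlemm}, Theorem~\ref{Tprops}(b), and the fact that $A(g)$ has no bounded components; but the direct route should be shorter.)

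The hard step is the isolation claim in the second part: producing one fast-but-not-maximally-fast orbit is routine once Corollary~\ref{Ccovering} is in hand, but showing that a whole neighbourhood of the constructed point contains no other such orbit requires quantitative control of $g$ and its iterates on open sets --- distortion and expansion estimates along the itinerary --- married to a precise picture of the gaps separating the maximal tracts from the non-escaping region. This is where the fine choice of constants and the specific analytic features of Hardy's function must be used in full; by comparison the geometric estimates of the first paragraph and the construction of the unbounded component are careful but routine.
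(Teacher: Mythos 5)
Your plan diverges from the paper's proof at both of the key junctures, and in each case the step you flag as ``the hard part'' is where your route becomes genuinely doubtful.

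For the singleton components, the paper does \emph{not} construct points by symbolic dynamics and then prove they are isolated in $A''(g_\alpha)$. Instead it first shows (reusing the structure of the set $S$ from Lemma~\ref{L1}) that $(q_\alpha,\infty)\cap A''(g_\alpha)$ is uncountable and totally disconnected, and then proves the crucial claim (\ref{mainclaim}) that the ray $(q_\alpha,\infty)$ is a \emph{whole component of $A(g_\alpha)$}. Once you know that, the singleton components drop out for free, because any component of $A''(g_\alpha)\subset A(g_\alpha)$ meeting $(q_\alpha,\infty)$ must lie inside $(q_\alpha,\infty)$ and hence be a point. Your alternative --- showing a punctured neighbourhood of a symbolically constructed $\zeta$ meets only $A'(g)\cup(\mathbb{C}\setminus A(g))$ --- is a much stronger two-dimensional isolation statement and you give no argument for it beyond ``distortion and expansion estimates along the itinerary''. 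There is no evident reason that nested-annulus orbits are topologically isolated in $A''$; indeed the nested-annulus machinery in the proof of Theorem~\ref{Tmain} produces a point, not a neighbourhood estimate, and nothing in Corollary~\ref{Ccovering} gives you the repulsion of nearby orbits that your isolation claim requires. The actual proof of (\ref{mainclaim}) is a delicate planar-topology argument: one uses the explicit equations (\ref{Req})--(\ref{dvyeq}) to construct a sequence of preimage curves $(\Gamma_n)$ of the imaginary axis lying in the attracting basin $U$, accumulating on $(q_\alpha,\infty)$ from above, and then argues by contradiction using the closed-and-unbounded property of $A_R^\ell$-components and strict monotonicity of $g_\alpha$ on $(0,\infty)$. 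None of this is captured by your sketch.

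For the unbounded component your proposal to exhibit an explicit unbounded connected subset of $A''(g_\alpha)$ is also not what the paper does, and it is unlikely to succeed as stated. The only obviously-fast unbounded set in the picture is $(q_\alpha,\infty)$, and the paper shows precisely that $(q_\alpha,\infty)\cap A''(g_\alpha)$ is totally disconnected, so no subinterval works; a tract that is ``fast escaping but never maximal'' in the sense you describe is not known to exist here. Your fallback via a multiply connected Fatou component is also ruled out, since $g_\alpha$ is bounded on the imaginary axis and so (by Lemma~\ref{Lbaker}) has no multiply connected Fatou component. The paper's actual argument is indirect: take the point $\zeta\in A''(g_\alpha)\cap J(g_\alpha)$ furnished by the proof of Theorem~\ref{Tmain} (with $m_k=1$); if its $A''$-component $Q$ were bounded then the unbounded $A(g_\alpha)$-component $Q'$ through $\zeta$ would contain a point of $A'(g_\alpha)$, forcing (via (\ref{mainclaim})) $g_\alpha^N(\zeta)\in(q_\alpha,\infty)$ for some $N$, which via (\ref{gsize}) contradicts the bound (\ref{quitesmall}) built into the Theorem~\ref{Tmain} construction. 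So (\ref{mainclaim}) is the linchpin of both halves of the lemma, and your proposal has no analogue of it.
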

\begin{proof}[Proof of Lemma~\ref{L1}]
To show that $A'(g_\alpha)$ is uncountable and totally disconnected we show that $A'(g_\alpha)$ consists of the preimages of a set $S \subset \mathbb{R}$, and we use properties of the maximum modulus of $g_\alpha$ to show that $S$ is uncountable and totally disconnected.

Although Hardy \cite{hardy1909} considered only $g_1$, it follows easily from his result that there exists $r_0 > 0$ such that the following holds. If $|z| \geq r_0$, then $z \in \mathcal{M}(g_\alpha)$ if and only if $z$ is real, and lies on the positive real axis if $\sin |z| > 0$ and on the negative real axis if $\sin |z| < 0$. Since $g_\alpha$ maps points on the real axis to the positive real axis, we deduce that 
\begin{equation}
\label{AandS}
A'(g_\alpha) = \bigcup_{k=0}^\infty g_\alpha^{-k}(S) \quad\text{where } S = \{x \geq r_0  : \sin g_\alpha^n(x) \geq 0, \text{ for } n \geq 0\}.
\end{equation} 

We prove first that $S$ is uncountable. Let $I_k = [2k\pi, (2k+1)\pi]$, for $k\isnatural$. We observe that $g'_\alpha(x)\rightarrow\infty$ as $x\rightarrow\infty$. Hence there exists $k_0 \geq r_0/2\pi$, $k_0\isnatural$, such that if $k\isnatural$ and $k\geq k_0$, then there exists $k' > k$ such that $I_{k'} \cup I_{k'+1} \subset g_\alpha(I_k)$.

We construct an increasing sequence of integers $(k_n)_{n\isnatural}$ as follows, noting that at each stage in the construction we have two distinct choices. First set either $k_1 = k_0$ or $k_1 = k_0+1$. Suppose inductively that $k_n$ is defined for some $n\isnatural$. There exists $k'_n > k_n$ such that $I_{k'_n} \cup I_{k'_n+1} \subset g_\alpha(I_{k_n})$. We choose either $k_{n+1} = k_n'$ or $k_{n+1} = k_n'+1$. This completes the construction of the sequence. We then find a point $\zeta \in S$ by Lemma~\ref{LemmaRS}, with $E_n = I_{k_n}$ and $m_n = 1$, for $n\isnatural$. 

Now let $S'$ be the subset of $S$ consisting of points which can be constructed in this way. Since at each stage in the construction we had two distinct choices, there is a surjection from $S'$ to the set of infinite binary strings. Hence $S'$, and so also $S$, is uncountable.

Next we prove that $S$ is totally disconnected. Let $J_k = [(2k+5/4)\pi, (2k+7/4)\pi]$, for $k\isnatural$. It follows from a construction similar to the above that if $k\geq r_0/2\pi$ is sufficiently large, then $J_{k}$ contains uncountably many points in $\mathbb{R}\backslash S$ and indeed in $\mathbb{R}\backslash A'(g_\alpha)$. (We simply replace all references to the intervals $I_n$ in the construction above to the intervals $J_n$, and thereby construct uncountably many points whose orbit lies in $\cup_{n\isnatural} \ J_n$, in which case each point lies in $\mathbb{R}\backslash A'(g_\alpha)$.)

Suppose next that $x_1 < x_2$ are points in $S$. Since $g_\alpha$ has a large derivative on the real axis, we deduce that there exists an arbitrarily large $k$, and $N\isnatural$ such that $g_\alpha^N(x_1) < (2k+5/4)\pi$ and $(2k+7/4)\pi < g_\alpha^N(x_2)$. Hence, there is a point $x' \in \mathbb{R}\backslash S$ such that $g_\alpha^N(x_1) < x' < g_\alpha^N(x_2)$. Since $g_\alpha$ is increasing on $\mathbb{R}$, there is a point $x''$ such that $x_1 < x'' < x_2$ and $g_\alpha^N(x'') = x'$. It follows that $x'' \notin S$, and we deduce that $S$ is totally disconnected, as required. 

It is known (\cite[Lemma 2.5]{MR3118409} and see also \cite[Chapter II]{MR0006493}) that a countable union of compact, totally disconnected subsets of $\mathbb{C}$ is totally disconnected. We deduce from this result, from the fact that $S$ is a countable union of compact, totally disconnected sets, and from (\ref{AandS}) that $A'(g_\alpha)$ is indeed totally disconnected.
\end{proof}

%
%
%
%
\begin{proof}[Proof of Lemma~\ref{L2}] 
Consider first the real-valued function $x \to g_\alpha(x)$. It is straightforward to show that $g''_\alpha(x)>0$, for $x > 0$. We may assume, therefore, that $\alpha>0$ is sufficiently small that $g_\alpha(x)$ has exactly two fixed points. An elementary calculation shows that there is an attracting fixed point $p_\alpha \in (0,1)$, and a repelling fixed point $q_\alpha \in (p_\alpha, \infty)$.

Suppose that $x \in (q_\alpha, \infty)$. Then there exists $N=N(x)\isnatural$ such that, by the result of Hardy \cite{hardy1909} mentioned in the proof of Lemma~\ref{L1},
\begin{equation}
\label{gsize}
g_\alpha^{n+1}(x) \geq \frac{1}{e^2} \max\{g_\alpha(g_\alpha^{n}(x)),g_\alpha(-g_\alpha^{n}(x))\} = \frac{1}{e^2} M(g_\alpha^{n}(x), g_\alpha), \qfor n\geq N.
\end{equation}
It follows by \cite[Theorem 2.9]{Rippon01102012} that $x \in A(g_\alpha)$. Hence $(q_\alpha, \infty) \subset A(g_\alpha)$.

It was shown in the proof of Lemma~\ref{L1} that, for sufficiently large values of $k\isnatural$, $[(2k+5/4)\pi, (2k+7/4)\pi] \backslash A'(g_\alpha)$ is uncountable. We deduce that $(q_\alpha, \infty) \cap A''(g_\alpha)$ is uncountable. We claim that $(q_\alpha, \infty) \cap A''(g_\alpha)$ is totally disconnected.  Suppose that $x_1 < x_2$ are points in $(q_\alpha, \infty) \cap A''(g_\alpha)$. Arguing as in the proof of Lemma~\ref{L1}, we deduce that there exists an arbitrarily large $k$, and $N\isnatural$ such that $g_\alpha^N(x_1) < 2k\pi$ and $(2k+1)\pi < g_\alpha^N(x_2)$. It follows that there is a point $x' \in A'(g_\alpha)$ such that $g_\alpha^N(x_1) < x' < g_\alpha^N(x_2)$. Since $g_\alpha$ is increasing on $\mathbb{R}$, there is a point $x''$ such that $x_1 < x'' < x_2$ and $g_\alpha^N(x'') = x'$. It follows that $x'' \notin A''(g_\alpha)$, and we deduce that $(q_\alpha, \infty) \cap A''(g_\alpha)$ is totally disconnected, as claimed. \\

We now show that there exists $\alpha_0>0$ such that
\begin{equation}
\label{mainclaim}
(q_\alpha, \infty) \text{ is a component of } A(g_\alpha), \qfor 0 < \alpha < \alpha_0.
\end{equation}
Since $(q_\alpha, \infty) \cap A''(g_\alpha)$ is uncountable and totally disconnected, the fact that $A''(g_\alpha)$ has uncountably many singleton components, for $0 < \alpha < \alpha_0$, then follows from (\ref{mainclaim}). 

The proof of (\ref{mainclaim}) is complicated. Roughly speaking, our method is as follows. First we choose $\alpha$ sufficiently small that $F(g_\alpha)$ has an unbounded attracting basin $U$, which lies outside the escaping set and contains infinitely many preimages of the positive imaginary axis. We then deduce (\ref{mainclaim}) by a careful analysis of the properties of these preimages, and from the properties of the fast escaping set. \\

First we note some facts about the function $g_\alpha$. The proof is simplified slightly by noting that $g_\alpha$ satisfies the equation
\begin{equation}
\label{cconj}
g_\alpha(\overline{z}) = \overline{g_\alpha(z)}, \qfor z \in \mathbb{C}.
\end{equation}
We observe that if $z = x + iy$ and $g_\alpha(z) = Re^{i\theta} = u + iv$, then, by a calculation,
\begin{align}
\label{Req}
\log R &= e^{x^2-y^2}\cos 2xy + \sin x \cosh y + \log \alpha,\\
\label{Thetaeq} 
\theta &= e^{x^2-y^2}\sin 2xy + \cos x \sinh y, \\
\label{duxeq} 
\frac{\partial u}{\partial x} &= u(e^{x^2-y^2}(2x\cos 2xy - 2y\sin 2xy)+ \cos x \cosh y \\ &- \tan \theta(e^{x^2-y^2}(2x\sin 2xy + 2y\cos 2xy) + \sin x \sinh y)), \nonumber\\
\label{dvyeq} 
\frac{\partial v}{\partial y} &= v(e^{x^2-y^2}(-2y\cos 2xy - 2x\sin 2xy) + \sin x \sinh y \\&+ \cot\theta(e^{x^2-y^2}(-2y\sin 2xy + 2x\cos 2xy)+ \cos x \cosh y)).\nonumber
\end{align}
Let $\Gamma_0 = \{ z : \arg z = \pi/2\}$ and $\gamma_0 = \{ z : \arg z = \pi/4\}$. We deduce from (\ref{Req}) that $g_\alpha$ is bounded on $\Gamma_0$.

\begin{figure}[ht]
	\centering
	\includegraphics[width=12cm,height=9cm]{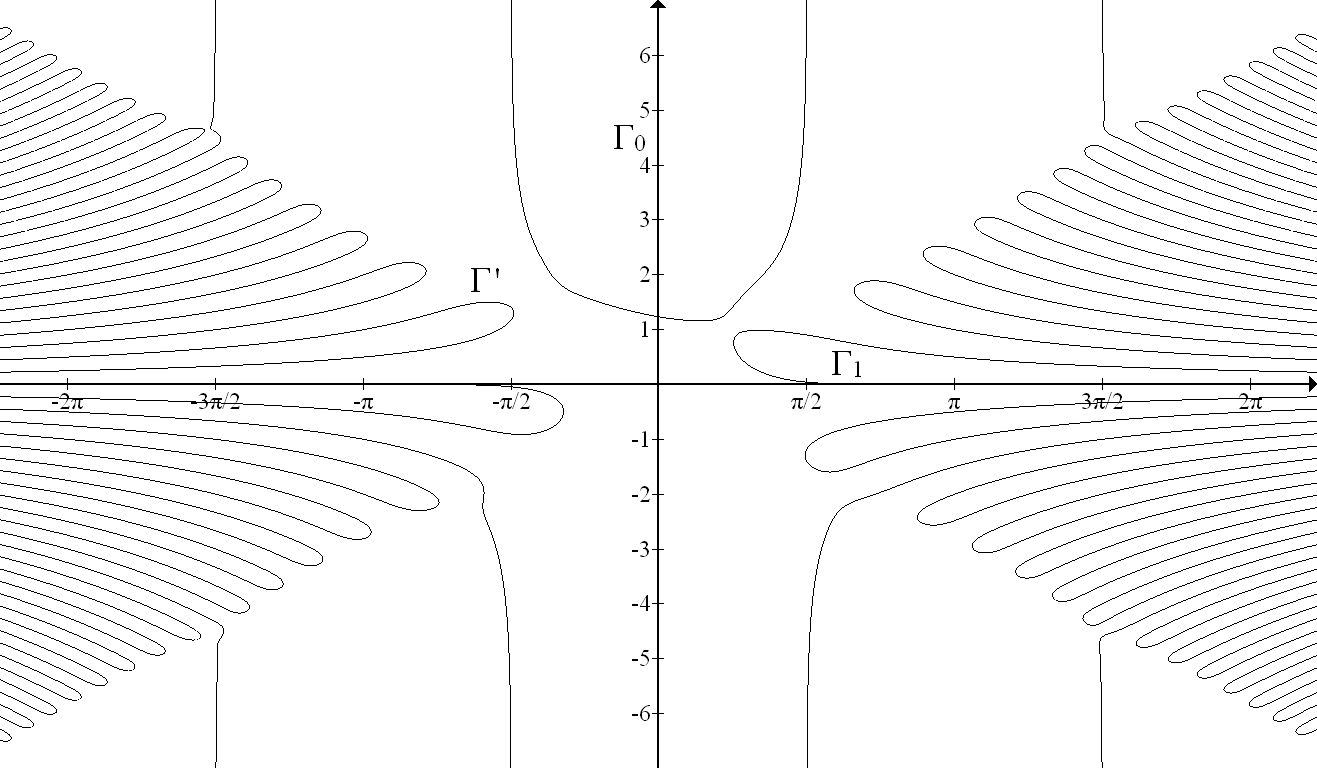}
	\caption{Some of the preimages of the positive imaginary axis, $\Gamma_0$, calculated by solving equation (\ref{Thetaeq}) for $\theta = \pi/2$.}
	\label{casen5}
\end{figure}

Next we define three domains $V'' \subset V' \subset V$ in which $g_\alpha$ has certain useful properties. First, for a large value of $K>1$, which we fix later, let $V$ be the domain $$V = \{ z=x+iy : x > K, \ 0 < y < e^{-x^2}\}.$$ It follows from (\ref{Req}) and (\ref{Thetaeq}) that we can choose $K$ sufficiently large that, for $z = x + iy \in V$,
\begin{equation}
\label{basicineq}
e^{x^2} - 2 \leq \log\frac{|g_\alpha(z)|}{\alpha} \leq e^{x^2} + 2 \quad\text{and}\quad 2y(xe^{x^2}-1) \leq \arg g_\alpha(z) \leq 2y(xe^{x^2}+1).
\end{equation}

We deduce that $V$ has unbounded intersection with a component of $g_\alpha^{-1}(\Gamma_0)$. Let $\Gamma_1$ be the intersection of this preimage component with $V$, and note by (\ref{basicineq}) that  
$$y \sim \frac{\pi}{4x}e^{-x^2} \text{ as } x\rightarrow\infty, \qfor x+iy \in \Gamma_1.$$ 

It follows by differentiating (\ref{Thetaeq}) that, increasing the size of $K$ if necessary, we may assume that
\begin{equation}
\label{Gamma1decreases}
\frac{dy}{dx} < 0, \qfor x+iy \in \Gamma_1.
\end{equation}

We deduce also that $V$ has unbounded intersection with a component of $g_\alpha^{-1}(\gamma_0)$. Let $\gamma_1$ be the intersection of this preimage component with $V$, and note by (\ref{basicineq}) that 
$$y \sim \frac{\pi}{8x}e^{-x^2} \text{ as } x\rightarrow\infty, \qfor x+iy \in \gamma_1.$$

Increasing the size of $K$ if necessary, we may assume that $\Gamma_1$ and $\gamma_1$ each intersect the boundary of $V$ only at a point with real part $K$ and modulus less than $2K$. We may also assume that if $K<x<x'$, then
\begin{equation}
\label{nobigjump}
\text{if } x + iy \in \gamma_1, \text{ then there exists } x' + iy' \in \Gamma_1 \text{ such that } y' < 4y.
\end{equation}

Let $V'\subset V$ be the domain bounded by $\Gamma_1$, $\{ z : \operatorname{Re}(z) = K\}$, and the real axis. Increasing the size of $K$ again, if necessary, we may assume, by (\ref{basicineq}), that
\begin{equation}
\label{argrest}
0 < \arg g_\alpha(z) < 3\pi/4, \qfor z \in V'.
\end{equation}

Then let $V''\subset V'$ be the domain bounded by $\gamma_1$, $\{ z : \operatorname{Re}(z) = K\}$, and the real axis. We deduce from (\ref{duxeq}) and (\ref{dvyeq}) that if $x + iy \in V''\cup \gamma_1$ and $g_\alpha(x+iy) = u + iv$, then as $x\rightarrow\infty$, 
\begin{equation*}
\frac{\partial u}{\partial x} = u\left(2x e^{x^2} + O(x)\right) \quad\text{and}\quad \frac{\partial v}{\partial y} = v\left(2x e^{x^2} \cot(\arg g_\alpha(z)) + O(x)\right).
\end{equation*}

Hence, by (\ref{basicineq}), may assume that 
\begin{equation}
\label{partials}
\frac{\partial}{\partial x}\operatorname{Re}(g_\alpha(x+iy)) > 0 \quad\text{and}\quad \frac{\partial}{\partial y}\operatorname{Im}(g_\alpha(x+iy)) >0, \qfor x + iy \in V'' \cup \gamma_1.
\end{equation}

Finally, by a similar argument to the one given earlier relating to $\Gamma_1$, we note that there is a component $\Gamma'$ of $g_\alpha^{-1}(\Gamma_0)$ which is asymptotic to the negative real axis. We omit the details. Increasing $K$ one last time, if necessary, we may assume that $B(0,K)\cap \Gamma' \ne \emptyset$. Note that $\Gamma_1$, $\gamma_1$ and $\Gamma'$ are each independent of $\alpha$. \\

Next we fix a value of $\alpha_0$. We choose $0 < \alpha_0 < e^{-1}$ sufficiently small that if $0 < \alpha < \alpha_0$, then $g_\alpha$ has the two fixed points $p_\alpha$ and $q_\alpha$ discussed earlier, and also that $$g_\alpha \left(B(0, 2K) \cup \Gamma_0 \right) \subset B(0, 1).$$ 

Suppose then that $0 < \alpha < \alpha_0$. We deduce that $g_\alpha$ has an unbounded simply connected Fatou component, $U$, which contains $\Gamma_0$, the disc $B(0,2K)$ and so also $\Gamma_1$ and $\Gamma'$, the attracting fixed point $p_\alpha$, and indeed the interval $(0, q_\alpha)$. Note that $q_\alpha \geq 2K$, but we do not assume that equality holds. Clearly $U \cap A(g_\alpha) = \emptyset$, and so all preimages of $\Gamma_0$ lie outside $A(g_\alpha)$.

Clearly $q_\alpha \in J(g_\alpha) \cap \mathbb{C} \backslash A(g_\alpha)$. We claim that $(q_\alpha, \infty) \subset J(g_\alpha)$. Since $g_\alpha$ is bounded on $\Gamma_0$, it follows by Lemma~\ref{Lbaker} that $g_\alpha$ has no multiply connected Fatou component. Moreover, the set $A'(g_\alpha)$ is dense in $(q_\alpha, \infty)$. We deduce by Theorem~\ref{Tprops} part (a) that $(q_\alpha, \infty) \subset J(g_\alpha)$, as claimed.

Suppose next that $U' \ne U$ is a component of $F(g_\alpha)$ such that $g_\alpha(U') \subset U$. We claim that $U' \cap V' = \emptyset$. Suppose, to the contrary, that $U' \cap V' \ne \emptyset$. Since the boundary of $V'$ consists of points either in $U$ or in $J(g_\alpha)$, we deduce that $U' \subset V'$. Now \cite[Theorem 3]{MR1642181} $g_\alpha(U')$ and $U$ may differ by at most two points. However, $\Gamma' \subset U$ contains points with argument arbitrarily close to $\pi$. This is a contradiction, by (\ref{argrest}), completing the proof of our claim.

For $\xi > K$, let $y(\xi)$ be such that $\xi + iy(\xi)$ is the point on $\gamma_1$ of real part $\xi$; this point is unique by (\ref{partials}). By (\ref{basicineq}), we can choose $\xi > q_\alpha$ sufficiently large that
\begin{equation}
\label{bigthings}
\operatorname{Re}(g_\alpha(\xi+iy')) > 2\xi \quad\text{and}\quad \operatorname{Im}(g_\alpha(\xi+iy')) > 8y', \qfor y' \in (0,y(\xi)].
\end{equation}
We claim that if $y' > 0$, then there is a curve $\gamma' \subset U$ such that $$(\gamma' \cap \{ z: \operatorname{Re}(z)\geq\xi\}) \subset \{ z: 0< \operatorname{Im}(z)< y'\}.$$

\begin{figure}[ht]
	\centering
	\includegraphics[width=12cm,height=9cm]{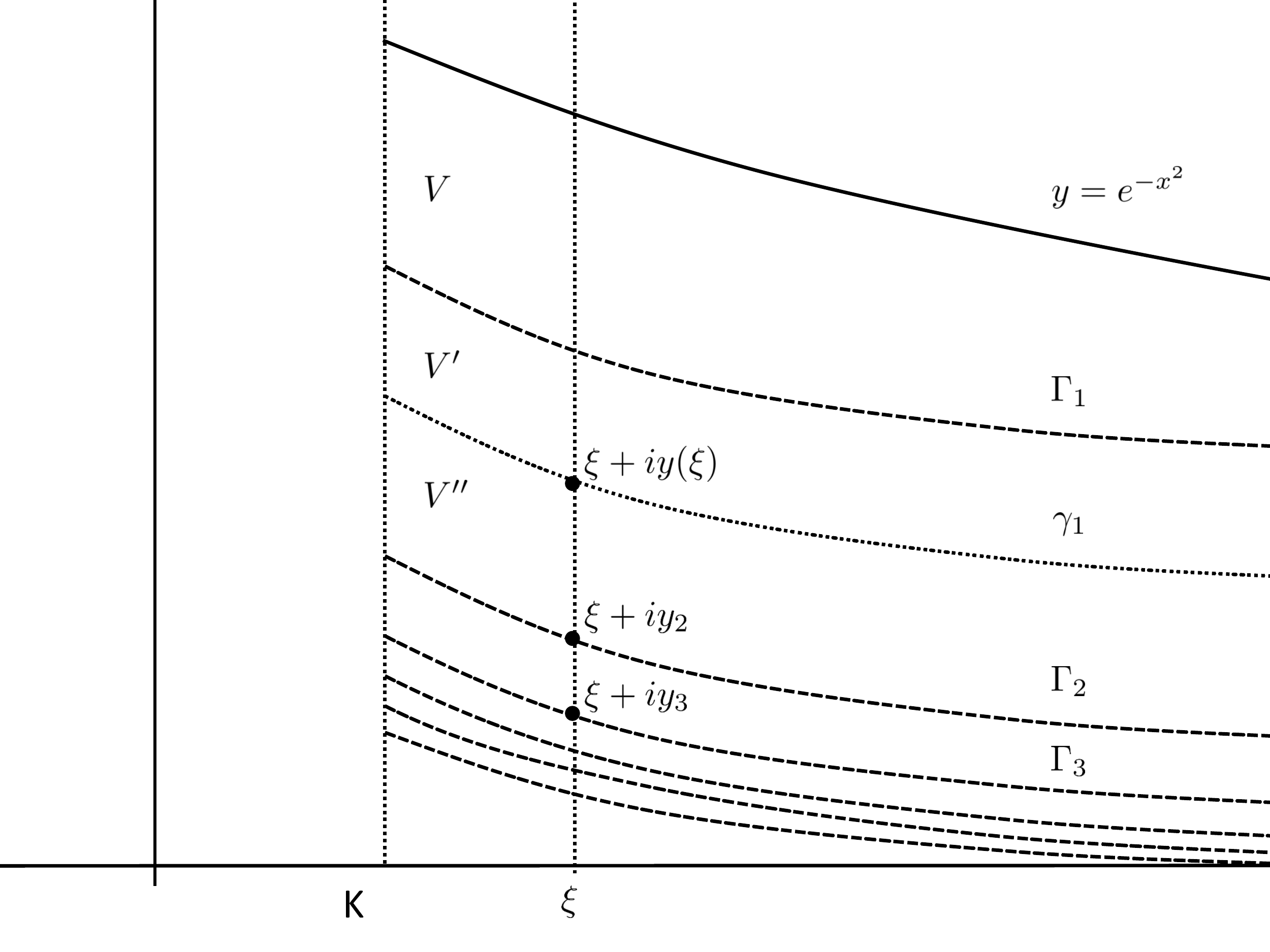}
	\caption{Some of the preimages of $\Gamma_0$, shown dashed, constructed in this proof.}
	\label{f2}
\end{figure}

We construct sequences of disjoint curves $(\Gamma_n)_{n\geq 2}$ and positive real numbers $(y_n)_{n\geq 2}$ such that $\Gamma_n \subset U \cap V''$ is an unbounded curve with a finite endpoint on the line $\{z : \operatorname{Re}(z) = K\}$, for $n\geq 2$; see Figure~\ref{f2}.

We first define $\Gamma_2$ and $y_2$. It follows by (\ref{nobigjump}), (\ref{partials}) and (\ref{bigthings}) that there exists $0 < y_2 < y(\xi)$ such that $g_\alpha(\xi + iy_2) \in \Gamma_1 \subset U$. Let $\Gamma_{2}$ be the intersection of $V''$ with the component of $g_\alpha^{-1}(\Gamma_1)$ containing $\xi + iy_2$. By the comments above, $\Gamma_{2}$ cannot lie in a preimage component of $U$ distinct from $U$, and so must lie in $U$. Also $\Gamma_2$ is unbounded and has a finite endpoint on the line $\{z : \operatorname{Re}(z) = K\}$ by (\ref{partials}) and since it cannot intersect with $\gamma_1$ or with the real axis.

Suppose next that, inductively, we have defined $\Gamma_k$ and $y_k$, for some $k\geq 2$. By (\ref{partials}) and (\ref{bigthings}), there exist $0 < y_{k+1} < y_k$ such that $g_\alpha(\xi + iy_{k+1}) \in \Gamma_k \subset U$. Let $\Gamma_{k+1}$ be the intersection of $V''$ with the component of $g_\alpha^{-1}(\Gamma_k)$ containing $\xi + iy_{k+1}$. We have that $\Gamma_{k+1}\subset U$ for the same reasons that applied to $\Gamma_2$. Finally $\Gamma_{k+1}$ is unbounded and has a finite endpoint on the line $\{z : \operatorname{Re}(z) = K\}$ by (\ref{partials}) and since it cannot intersect with $\Gamma_k$ or with the real axis. This completes the construction of the sequences. 

Suppose that $n\geq 2$. It follows by (\ref{Gamma1decreases}) and (\ref{partials}) that $$y_n = \max \{\operatorname{Im}(z): z \in \Gamma_n \text{ and } \operatorname{Re}(z)\geq\xi\}.$$ It also follows by (\ref{partials}) and (\ref{bigthings}) that $y_{n+1} < y_n/8$, for $n\isnatural$. This completes the proof of the claim following equation (\ref{bigthings}).

Let $W$ be the component of $\mathbb{C}\backslash U$ containing $[q_\alpha, \infty)$, and let $W'=W\backslash [\xi, \infty)$. We claim that $W'$ is bounded. To prove this we let $\Gamma_1'$ be the curve in $U$ formed by the union of $\Gamma_1$, the complex conjugate of $\Gamma_1$ (recall (\ref{cconj})), and the part of the line $\{ z: \operatorname{Re}(z)=K\}$ joining the finite endpoints of these curves. Let $S$ be the bounded component of $\mathbb{C}\backslash (\Gamma_1' \cup\{z : \operatorname{Re}(z) = \xi\})$. We claim that, in fact, $W' \subset S$. For, suppose that $z \in W' \cap (\mathbb{C}\backslash S)$. If $z$ lies in the component of $\mathbb{C}\backslash\Gamma_1'$ which does not contain $[q_\alpha, \infty)$, then $\Gamma_1'\subset U$ separates $z$ from $[q_\alpha, \infty)$. On the other hand, if $z$ lies in the component of $\mathbb{C}\backslash\Gamma_1'$ which does contain $[q_\alpha, \infty)$, then Re$(z)\geq \xi$ and Im$(z)\ne 0$. Recalling the sequence of disjoint curves $(\Gamma_n)_{n\geq 2}$ constructed earlier, we see that  $z$ can be separated from $[q_\alpha, \infty)$ by the curve in $U$ formed by the union of $\Gamma_n$, for some $n\isnatural$, the complex conjugate of $\Gamma_n$, and the part of the line $\{ z: \operatorname{Re}(z)=K\}$ joining the finite endpoints of these curves. Hence $W'$ is bounded as claimed.

Let $T$ be the component of $A(g_\alpha)$ containing $(q_\alpha, \infty)$. Suppose, contrary to (\ref{mainclaim}), that $T \ne (q_\alpha, \infty)$. Then, since $q_\alpha \notin A(g_\alpha)$, there exists $\zeta \in T$ such that $\operatorname{Im}(\zeta) \ne 0$. We have that $\zeta \in W'$ as otherwise $U$ separates $\zeta$ from $(q_\alpha, \infty)$.

It follows from (\ref{Adef}) that there exist $\ell \in\mathbb{Z}$ and $R > 0$ such that $M^n(R,g_\alpha)\rightarrow\infty$ as $n\rightarrow\infty$ and $\zeta \in A_R^\ell(g_\alpha)$, where
\begin{equation}
\label{ARdef} A_R^\ell(g_\alpha) = \{ z : |g_\alpha^{n}(z)| \geq M^{n+\ell}(R,g_\alpha), \text{ for } n \isnatural, \ n+\ell\isnatural\} \subset A(g_\alpha).
\end{equation}

Let $T'$ be the component of $A_R^\ell(g_\alpha)$ containing $\zeta$. We have \cite[Theorem~1.1]{Rippon01102012} that $T'$ is closed and unbounded. Since $T'\cap U = \emptyset$ and $W'$ is bounded,  
$T'$ 
contains $[\xi, \infty)$. We deduce also that $g_\alpha(T') \subset T'$.\

Let $\xi'>q_\alpha$ be the smallest value such that $[\xi',\infty)\subset T'$. Since $W'$ is bounded,  by (\ref{ARdef}) there exists $N\isnatural$ such that $$g_\alpha^N(T') \cap W'=\emptyset.$$ Since $T'$ is connected and $g_\alpha^N(T')\subset T'$, we deduce that $g_\alpha^N(T') \subset [\xi, \infty)$. Hence $T'$ contains a curve $T''$ such that $\{\zeta\} \cup 
[\xi', \infty)\subset T''$.

Choose $\xi''$ such that $q_\alpha < \xi'' < \xi'$. Set $\widehat{T} = T'' \cup [\xi'',\xi')$. We note that there exists $N'\isnatural$ such that $g_\alpha^{N'}(\widehat{T}) \subset [\xi, \infty)$. Hence there is no neighbourhood of $\xi'$ in which $g_\alpha^{N'}$ is a homeomorphism. This is a contradiction, since $g_\alpha'(x) > 0$, for $x>0$. This contradiction completes our proof of (\ref{mainclaim}). Hence, as already observed, $(q_\alpha, \infty)$ contains uncountably many singleton components of $A''(g_\alpha)$. \\

%
%

Finally we show that $A''(g_\alpha)$ has at least one unbounded component. We recall that $g_\alpha$ has no multiply connected Fatou component. Hence we may suppose that $\zeta \in A''(g_\alpha)$ is the point constructed in the proof of Theorem~\ref{Tmain}, with $f = g_\alpha$. We note that since $|g_\alpha(z)|<1$, for $z \in \Gamma_0$, we may assume that the sequence $(m_k)_{k\isnatural}$, also constructed in the proof of Theorem~\ref{Tmain}, satisfies $m_k = 1$, for $k\isnatural$. 

Let $Q$ be the component of $A''(g_\alpha)$ containing $\zeta$, and suppose that $Q$ is bounded. Let $Q'$ be the component of $A(g_\alpha)$ containing $\zeta$. Since \cite[Theorem~1.1]{Rippon01102012} all components of $A(g_\alpha)$ are unbounded, $Q' \ne Q$. In particular $Q'$ contains a point $\zeta' \in A'(g_\alpha)$. 

Since $\zeta' \in A'(g_\alpha)$, there exists $N\isnatural$ such that $g_\alpha^N(\zeta')\in(q_\alpha, \infty)$. Since $(q_\alpha, \infty)$ is a component of $A(g_\alpha)$,  we obtain that $g_\alpha^N(Q')\subset (q_\alpha, \infty)$. In particular we have that $g_\alpha^N(\zeta)\in(q_\alpha, \infty)$. 

We deduce, by (\ref{gsize}) with $x = g_\alpha^N(\zeta)$, that there exists $N'\isnatural$ such that 
\begin{equation}
\label{neededeq}
g_\alpha(g^n_\alpha(\zeta)) \geq \frac{1}{e^2}M(g_\alpha^n(\zeta), g_\alpha), \qfor n\geq N'.
\end{equation}
This is a contradiction to (\ref{quitesmall}), and so $Q$ is unbounded as required.
\end{proof}
\end{example}
\begin{example}
\label{Lexample}\normalfont
We define a family of {\tef}s by $$h_\alpha(z) =  \alpha e^z, \qfor \alpha\in (0, e^{-1}).$$ This family is contained in the \itshape exponential family\normalfont, the dynamics of which have been widely studied. It is well-known that $h_\alpha$ has an unbounded simply connected Fatou component, which contains the imaginary axis and an attracting fixed point, and also a repelling fixed point $q > 1$.

As in Example~\ref{Example1}, we see that $(q, \infty) \subset A'(h_\alpha)$. The techniques of the proof of Lemma~\ref{L2} may be used to show that $(q, \infty)$ is a component of $A(h_\alpha)$. We omit the details.
\end{example}
%
%
\itshape Acknowledgment: \normalfont
The author is grateful to Gwyneth Stallard and Phil Rippon for all their help with this paper, and also John Osborne for his drawing his attention to \cite{MR2599894}.
%
%
\bibliographystyle{acm}
\bibliography{../../Research.References}

\begin{thebibliography}{10}

\bibitem{MR0153842}
{\sc Baker, I.~N.}
\newblock Multiply connected domains of normality in iteration theory.
\newblock {\em Math. Z. 81\/} (1963), 206--214.

\bibitem{MR759304}
{\sc Baker, I.~N.}
\newblock Wandering domains in the iteration of entire functions.
\newblock {\em Proc. London Math. Soc. (3) 49}, 3 (1984), 563--576.

\bibitem{MR1216719}
{\sc Bergweiler, W.}
\newblock Iteration of meromorphic functions.
\newblock {\em Bull. Amer. Math. Soc. (N.S.) 29}, 2 (1993), 151--188.

\bibitem{MRunknown}
{\sc Bergweiler, W.}
\newblock An entire function with simply and multiply connected wandering
  domains.
\newblock {\em Pure Appl. Math. Quarterly 7}, 2 (2011), 107--120.

\bibitem{MR1684251}
{\sc Bergweiler, W., and Hinkkanen, A.}
\newblock On semiconjugation of entire functions.
\newblock {\em Math. Proc. Cambridge Philos. Soc. 126}, 3 (1999), 565--574.

\bibitem{MR1230383}
{\sc Carleson, L., and Gamelin, T.~W.}
\newblock {\em Complex dynamics}.
\newblock Universitext: Tracts in Mathematics. Springer-Verlag, New York, 1993.

\bibitem{MR1102727}
{\sc Eremenko, A.~E.}
\newblock On the iteration of entire functions.
\newblock {\em Dynamical systems and ergodic theory ({W}arsaw 1986) 23\/}
  (1989), 339--345.

\bibitem{hardy1909}
{\sc Hardy, G.~H.}
\newblock {The maximum modulus of an integral function}.
\newblock {\em Quarterly J. of Math.\/} (1909).

\bibitem{MR1049148}
{\sc Hayman, W.~K.}
\newblock {\em Subharmonic functions. {V}ol. 2}, vol.~20 of {\em London
  Mathematical Society Monographs}.
\newblock Academic Press Inc. [Harcourt Brace Jovanovich Publishers], London,
  1989.

\bibitem{MR1642181}
{\sc Herring, M.~E.}
\newblock Mapping properties of {F}atou components.
\newblock {\em Ann. Acad. Sci. Fenn. Math. 23}, 2 (1998), 263--274.

\bibitem{MR0006493}
{\sc Hurewicz, W., and Wallman, H.}
\newblock {\em Dimension theory}.
\newblock Princeton University Press, Princeton, N. J., 1948.

\bibitem{Osborne1}
{\sc Osborne, J.}
\newblock The structure of spider's web fast escaping sets.
\newblock {\em Bulletin of the London Mathematical Society 44}, 3 (2012),
  503--519.

\bibitem{MR3118409}
{\sc Osborne, J.}
\newblock Connectedness properties of the set where the iterates of an entire
  function are bounded.
\newblock {\em Math. Proc. Cambridge Philos. Soc. 155}, 3 (2013), 391--410.

\bibitem{MR2599894}
{\sc Rempe, L.}
\newblock The escaping set of the exponential.
\newblock {\em Ergodic Theory Dynam. Systems 30}, 2 (2010), 595--599.

\bibitem{2013arXiv1301.1328R}
{\sc Rippon, P., and Stallard, G.~M.}
\newblock {Annular itineraries for entire functions}.
\newblock {\em To appear in Trans. Amer. Math. Soc., arXiv:1301.1328v1\/}
  (2013).

\bibitem{MR2177186}
{\sc Rippon, P.~J., and Stallard, G.~M.}
\newblock Escaping points of meromorphic functions with a finite number of
  poles.
\newblock {\em J. Anal. Math. 96\/} (2005), 225--245.

\bibitem{MR2117213}
{\sc Rippon, P.~J., and Stallard, G.~M.}
\newblock On questions of {F}atou and {E}remenko.
\newblock {\em Proc. Amer. Math. Soc. 133}, 4 (2005), 1119--1126.

\bibitem{MR2471088}
{\sc Rippon, P.~J., and Stallard, G.~M.}
\newblock Escaping points of entire functions of small growth.
\newblock {\em Math. Z. 261}, 3 (2009), 557--570.

\bibitem{MR2544754}
{\sc Rippon, P.~J., and Stallard, G.~M.}
\newblock Functions of small growth with no unbounded {F}atou components.
\newblock {\em J. Anal. Math. 108\/} (2009), 61--86.

\bibitem{rippon-2008}
{\sc Rippon, P.~J., and Stallard, G.~M.}
\newblock Slow escaping points of meromorphic functions.
\newblock {\em Trans. Amer. Math. Soc. 363\/} (2011), 4171--4201.

\bibitem{Rippon01102012}
{\sc Rippon, P.~J., and Stallard, G.~M.}
\newblock Fast escaping points of entire functions.
\newblock {\em Proc. London Math. Soc. (3) 105}, 4 (2012), 787--820.

\bibitem{MR3095155}
{\sc Rippon, P.~J., and Stallard, G.~M.}
\newblock Baker's conjecture and {E}remenko's conjecture for functions with
  negative zeros.
\newblock {\em J. Anal. Math. 120\/} (2013), 291--309.

\bibitem{MR2838342}
{\sc Sixsmith, D.~J.}
\newblock Entire functions for which the escaping set is a spider's web.
\newblock {\em Math. Proc. Cambridge Philos. Soc. 151}, 3 (2011), 551--571.

\bibitem{Sixsmith3}
{\sc Sixsmith, D.~J.}
\newblock On fundamental loops and the fast escaping set.
\newblock {\em To appear in J. London Math. Soc, arXiv:1301.2676v1\/} (2012).

\bibitem{MR2959920}
{\sc Sixsmith, D.~J.}
\newblock Simply connected fast escaping {F}atou components.
\newblock {\em Pure Appl. Math. Q. 8}, 4 (2012), 1029--1046.

\bibitem{MR1662226}
{\sc Tyler, T.~F.}
\newblock Maximum curves and isolated points of entire functions.
\newblock {\em Proc. Amer. Math. Soc. 128}, 9 (2000), 2561--2568.

\bibitem{valironlectures}
{\sc Valiron, G.}
\newblock {\em Lectures on the general theory of integral functions}.
\newblock Chelsea, 1949.

\end{thebibliography}
\end{document}